\journalname{}
\begin{document}

\title{Tikhonov regularized inertial primal-dual dynamics for convex-concave bilinear saddle point problems}

\author{Xiangkai Sun$^{1}$\and Liang He$^{1}$\and~Xian-Jun Long$^{1}$}

\institute{
          \\ Xiangkai Sun (\Letter) \at {\small sunxk@ctbu.edu.cn} \\ \\Liang He \at{\small liangheee@126.com}  \\
          \\ Xian-Jun Long \at{\small xianjunlong@ctbu.edu.cn}  \\\\
               $^{1}$Chongqing Key Laboratory of  Statistical Intelligent Computing and Monitoring, College of Mathematics and Statistics,
 Chongqing Technology and Business University,
Chongqing 400067, China.}

\date{Received: date / Accepted: date}

\maketitle

\begin{abstract}
In this paper, for a convex-concave bilinear saddle point problem, we propose a Tikhonov regularized second-order primal-dual dynamical system with slow damping, extrapolation and general time scaling  parameters. Depending on the vanishing speed of the rescaled regularization parameter (i.e., the product of  Tikhonov regularization parameter and general time scaling parameter), we analyze the convergence properties of the trajectory generated by the dynamical system. When the rescaled regularization parameter decreases rapidly to zero, we obtain  convergence rates of the primal-dual gap and velocity vector along the trajectory generated by the dynamical system. In the case that the rescaled regularization parameter tends slowly to zero, we show the strong convergence of the trajectory towards the minimal norm solution of the convex-concave bilinear saddle point problem. Further, we also present some numerical experiments to illustrate the theoretical results.
\end{abstract}
\keywords{ Saddle point problems \and Primal-dual dynamical system \and  Tikhonov regularization \and Strong convergence }
\subclass{ 34D05\and 37N40 \and 46N10\and 90C25}

\section{Introduction}
The saddle point problem, also known as the min-max optimization problem, is a field of active research in  recent years
due to  a wide range of applications  in different fields, including signal/image processing, machine learning, and optimization communities; see, e.g. \cite{CP1,CP2,CP3,CP4}.

In this paper,   we are interested in the   following convex-concave saddle point problem with a bilinear coupling term:
\begin{equation} \label{PD}
\min_{x\in \mathbb{R}^n} \max_{y\in \mathbb{R}^m}\mathcal{L}(x,y) \coloneqq f(x)+\left \langle Kx,y \right \rangle-g(y),
\end{equation}
where  $ K:\mathbb{R}^n \to \mathbb{R}^m $ is a continuous linear operator, $ \left\langle \cdot,\cdot \right\rangle  $ represents the standard inner product of vectors, and both $ f : \mathbb{R}^n \to \mathbb{R} $ and $g : \mathbb{R}^m \to \mathbb{R}$  are  continuously differentiable convex functions.   Associated with the   problem (\ref{PD}), its primal problem is
\begin{equation*} \label{P}
\min_{x\in \mathbb{R}^n} \mathcal{P}(x)\coloneqq f(x)+g^*(Kx),
\end{equation*}
 and the corresponding dual problem is
\begin{equation*} \label{D}
\max_{y\in \mathbb{R}^m} \mathcal{D}(y)\coloneqq -f^*(-K^*y)-g(y),
\end{equation*} where $ f^* $ and $g^* $ are the conjugate functions of $ f $ and $ g $, respectively, and $ K^*:\mathbb{R}^m \to \mathbb{R}^n $ denotes the  adjoint operator of $ K $.

Recently, different kinds of second-order primal-dual dynamical systems, also known as second-order saddle point dynamical systems, are proposed to solve the convex-concave saddle point problem (\ref{PD}) as well as  the following  convex optimization problem with linear equality constraints:
\begin{equation}\label{LP}
\begin{array}{rc}
\min\limits_{x\in \mathbb{R}^n}& f(x)\\
s.t. &Kx=b.
\end{array}
\end{equation}
 Zeng et al. \cite{Zeng} first propose   the following inertial primal-dual dynamical system for problem (\ref{LP}):
\begin{equation*}\label{Zeng}
\begin{cases}\ddot{x}(t)+\frac{\alpha}{t}\dot{x}(t)+\nabla_{x}\mathcal{L}_{\rho}(x(t),\lambda(t)+\theta t\dot{\lambda}(t)) =0,\\
 \ddot{\lambda}(t)+\frac{\alpha}{t}\dot{\lambda}(t)- \nabla_{\lambda} \mathcal{L}_\rho(x(t)+\theta t\dot{x}(t),\lambda(t)) =0 ,
\end{cases}
\end{equation*}
where $ \alpha>0 $, $ \theta=\max\left\{\frac{1}{2},\frac{3}{2\alpha}\right\} $ and  $ \mathcal{L}_{\rho} $ is the augmented Lagrangian function of problem (\ref{LP}) with the penalty parameter $ \rho\geq 0$. They show that the primal-dual gap and the feasibility violation enjoy  fast convergence rates. To further speed up the convergence, Hulett and Nguyen \cite{Hulett2023T} improve the convergence results of  \cite{Zeng} by employing time scaling technique, and obtain the weak convergence of the trajectory to a primal-dual solution of problem (\ref{LP}). Moreover,  He et al. \cite{He2023I}  consider the  performance of the following inertial primal-dual dynamical system with an  external perturbation:
\begin{equation*}\label{He}
\begin{cases}\ddot{x}(t)+\frac{\alpha}{t^q}\dot{x}(t)+\beta(t)\nabla_{x}\mathcal{L}_{\rho}(x(t),\lambda(t)+\delta t^{s}\dot{\lambda}(t))+\epsilon(t)=0 ,\\
 \ddot{\lambda}(t)+\frac{\alpha}{t^q}\dot{\lambda}(t)-\beta(t) \nabla_{\lambda} \mathcal{L}_\rho(x(t)+\delta t^{s}\dot{x}(t),\lambda(t)) =0 ,
\end{cases}
\end{equation*}
where $ \alpha>0 $, $ \delta>0 $, $ 0\leq q \leq s \leq 1 $,  $ \beta: \left[t_0,+\infty\right)\to (0,+\infty)$ is the scaling function and $ \epsilon: \left[t_0,+\infty\right)\to \mathbb{R}^n$ is  the integrable source term serving as a small external perturbation. In the case that  problem (\ref{LP}) has a separable structure, He et al. \cite{He2021C} study a second-order primal-dual dynamical system with general damping and extraplotation parameters, and obtain the convergence results. For more results on the convergence rates of inertial dynamical systems for problem (\ref{LP}),  we refer the readers to \cite{Fazlyab2017A,Bot2021I,He2022S,He2022F,Luo2022A}.

In the quest for a strong convergence of trajectories,  the investigation of inertial primal-dual dynamical systems
controlled by Tikhonov regularization terms for  problem (\ref{LP}) has attached the interest of many researchers. Following the ``second-order primal'' + ``first-order dual'' dynamics approach  introduced in \cite{He2022S}, Zhu et al. \cite{Zhu2024T} propose the following Tikhonov regularized   primal-dual dynamical system with asymptotically vanishing damping:
\begin{equation}\label{ZhuS1}
\begin{cases}\ddot{x}(t)+\frac{\alpha}{t}\dot{x}(t)+\nabla_{x}\mathcal{L}_{\rho}(x(t),\lambda(t))+\epsilon(t)x(t) =0,\\
 \dot{\lambda}(t)-t\nabla_{\lambda}\mathcal{L}_{\rho}\left(x(t)+\frac{t}{\alpha-1}\dot{x}(t),\lambda(t)\right)=0,
\end{cases}
\end{equation}
where $ \alpha>1 $ and $ \epsilon: \left[t_0,+\infty \right) \to \left[ 0,+\infty \right) $ is the Tikhonov regularization function. They show not only fast convergence results but also strong convergence of the primal trajectory to the minimal norm solution of problem (\ref{LP}) when Tikhonov regularization parameter $ \epsilon(t) $ satisfies suitable conditions.
  Recently, based on a new  augmented Lagrangian   $ \mathcal{L}_t(x,\lambda)\coloneqq \mathcal{L}(x,\lambda) +\frac{c}{2t^p}\left( \|x\|^2-\|\lambda\|^2\right) $ with $ c>0 $ and $ p>0 $, Chbani et al. \cite{Chbani2024O}  propose the following  primal-dual dynamical system with constant damping:
\begin{equation}\label{Chbani}
\begin{cases}
\ddot{x}(t)+\alpha\dot{x}(t)+t^p\nabla_{x}\mathcal{L}_t(x(t),\lambda(t)) =0,\\
\dot{\lambda}(t)-t^p\nabla_{\lambda}\mathcal{L}_t(x(t)+\frac{1}{\tau} \dot{x}(t),\lambda(t))=0,
\end{cases}
\end{equation}
where  $ \alpha>0$  and  ${\tau}>0$. Compared with the dynamical system (\ref{ZhuS1}), the system (\ref{Chbani}) can be seen as  Tikhonov regularization terms enter  both the primal and dual variables. Under
suitable conditions, they obtain  the strong convergence of the trajectory  generated by the dynamical system (\ref{Chbani})   to the minimal norm primal-dual solution of problem (\ref{LP}), and  also establish the convergence rate results of the primal-dual gap, the objective residual and the feasibility violation. More general than the dynamical system  (\ref{Chbani}), Zhu et al. \cite{Zhu2024S} propose a slowly damped inertial primal-dual dynamical system controlled by  Tikhonov regularization terms for both primal and dual variables:
\begin{equation*}\label{ZhuS}
\begin{cases}
\ddot{x}(t)+\frac{\alpha}{t^q}\dot{x}(t)+t^s\left(\nabla_{x}\mathcal{L}(x(t),\lambda(t)) +\frac{c}{t^p}x(t)\right) =0,\\
\dot{\lambda}(t)-t^{q+s}\left(\nabla_{\lambda}\mathcal{L}(x(t)+\theta t^q \dot{x}(t),\lambda(t))-\frac{c}{t^p}\lambda(t)\right)=0,
\end{cases}
\end{equation*}
where $ 0\leq q <1 $, $ 0<p<1 $, $ \alpha>0 $, $ c>0 $ and $ \theta>0 $. They also establish the fast convergence rate results of the primal-dual gap, the objective residual and the feasibility violation, and the strong convergence of
the trajectory to the minimal norm primal-dual solution of problem (\ref{LP}).

On the other hand, many scholars have used primal-dual dynamics approach to solve the convex-concave saddle point problem. It is worth noting that most of the works focus on the first-order dynamical systems, see \cite{Cherukuri2017S,Qu2018O,Cherukuri2019T,Garg2021F,Shi2023F}. The study on the second-order dynamical systems for the convex-concave saddle point problem (\ref{PD}) is relatively limited. More precisely, He et al. \cite{2024he} propose  the following second-order primal-dual dynamical system with general damping, scaling and extrapolation parameters for the convex-concave saddle point problem (\ref{PD}):
\begin{equation}\label{HeA}
\begin{cases}\ddot{x}(t)+\alpha(t)\dot{x}(t)+\beta(t) (\nabla_{x}\mathcal{L}(x(t),y(t)+\delta(t)\dot{y}(t)) )=0,\\
 \ddot{y}(t)+\alpha(t)\dot{y}(t)-\beta(t)(\nabla_y \mathcal{L}(x(t)+\delta(t)\dot{x}(t),y(t)) )=0,\end{cases}
\end{equation}
 where $ \alpha:\left[t_0,+\infty\right)\to (0,+\infty) $,  $ \beta:\left[t_0,+\infty\right)\to (0,+\infty) $    and $ \delta:\left[t_0,+\infty\right)\to (0,+\infty) $ are damping,  scaling  and  extrapolation functions, respectively. They show that the convergence rate of the  primal-dual gap along the trajectory
 is $ \mathcal{O}\left( 1/(t^{2a}\delta(t)^2\beta(t))\right) $. Further, Luo \cite{Luo2024AC,Luo2024AP} proposes new dynamical systems for the convex-concave saddle point problem (\ref{PD}) and shows that the primal-dual gap can converge exponentially,  provided that both $ f  $ and $ g $ satisfy some strong convexity assumptions.

We observe that there is a vacancy on the strong convergence of trajectories when considering the convex-concave saddle point problem  (\ref{PD}). Inspired by the works reported in \cite{Zhu2024T,2024he}, this paper will consider the  following Tikhonov regularized primal-dual dynamical system for problem (\ref{PD}):
\begin{equation}\label{dyn}
\begin{cases}\ddot{x}(t)+\frac{\alpha}{t^q}\dot{x}(t)+\beta(t)\left(\nabla_{x}\mathcal{L}\left(x(t),y(t)+\frac{t^q}{\alpha-1}\dot{y}(t)\right)+\frac{c}{t^p}x(t)\right)=0,\\
 \ddot{y}(t)+\frac{\alpha}{t^q}\dot{y}(t)-\beta(t)\left(\nabla_y \mathcal{L}\left(x(t)+\frac{t^q}{\alpha-1}\dot{x}(t),y(t)\right)-\frac{c}{t^p}y(t)\right)=0.\end{cases}
\end{equation}
Here, $\alpha>1$, $0<q<1$, $p>0$, $c>0$, $\frac{\alpha}{t^q}  $  is the slow damping parameter,   $ \beta:\left[t_0,+\infty\right)\to (0,+\infty) $ is the general time scaling function,  $ \frac{t^q}{\alpha-1} $ is the extrapolation parameter, and $ \frac{c}{t^p} $ is the  Tikhonov regularization parameter.  In the sequel, we assume that   $ \beta $ is a  continuously differentiable and nondecreasing function. We  denote by $ \frac{c}{t^p}\beta(t) $  the rescaled  regularization parameter which was first introduced in  \cite{Zhu2024F}.

The contributions of this paper can be more specifically stated as follows:
\begin{itemize}
\item[{\rm (a)}] In the case that  $ \frac{c}{t^p}\beta(t) $    decreases rapidly to zero, i.e., $ \int_{t_0}^{+\infty}  t^{q-p}\beta(t) dt<+\infty $, we show that the  convergence rate of the primal-dual gap along the trajectory $(x(t),y(t))$  generated by the dynamical system (\ref{dyn}) is $ \mathcal{O}\left( \frac{1}{t^{2q}\beta(t)}\right) $.

\item[{\rm (b)}] In the case that $ \frac{c}{t^p}\beta(t) $   tends slowly to zero, i.e., $ \int_{t_0}^{+\infty}  t^{-q-p}\beta(t) dt<+\infty $,  we show that the   convergence rate of  the primal-dual gap along the trajectory $(x(t),y(t))$  generated by the dynamical system $(\ref{dyn})$ reaches $ o\left(\frac{1}{\beta(t)}\right) $.

\item[{\rm (c)}] In the case that  $ \frac{c}{t^p}\beta(t) $ satisfies $ \int_{t_0}^{+\infty}  t^{-q-p}\beta(t) dt<+\infty $ and  $ \lim_{t\to +\infty} t^{M-p}\beta(t)=+\infty $ ($M$ is a  positive constant which will be defined later), the trajectory $(x(t),y(t))$ converges strongly to the minimal norm solution of problem (\ref{PD}).
\end{itemize}

The rest of this paper is organized as follows. In Section 2,  we recall some basic notations and present some preliminary results. In Section 3, we  establish   convergence properties of the primal-dual gap and velocity vector along the trajectory generated by the dynamical system $(\ref{dyn})$. In Section 4, we present the  strong convergence of  the  trajectory  generated by the dynamical system (\ref{dyn}) to the minimal norm solution of problem (\ref{PD}). In Section 5, we give   two numerical  examples to illustrate the  theoretical results.

\section{Preliminaries}
Let $ \mathbb{R}^n$  be the $ n $-dimensional Euclidean space  equipped
with standard inner product $ \langle \cdot ,\cdot \rangle $ and the usual Euclidean norm $ \| \cdot \| $.
 For any $x\in \mathbb{R}^n $ and $y\in  \mathbb{R}^m $,  the norm of the Cartesian
 product $ \mathbb{R}^n\times \mathbb{R}^m $ is defined as
\begin{equation*}
\|(x,y)\|=\sqrt{\|x\|^2+\|y\|^2}.
\end{equation*} Let  $  \mathbb{B}(x,r) $ be
the open ball centered at $ x $ in $\mathbb{R}^n $ with radius $ r>0 $.  Let $ K:\mathbb{R}^n  \to \mathbb{R}^m $ be a continuous linear operator and $ K^* : \mathbb{R}^m  \to \mathbb{R}^n$ be its  adjoint operator. For a set  $ D\subseteq \mathbb{R}^n $,  $ \mbox{Proj}_{D}0 $ denotes the set of points in $ D $ that are closest to the origin, where $ \mbox{Proj}$ is the projection operator. If $ D $ is a closed convex set, $ \mbox{Proj}_{D}0 $ represents the one with minimal norm in $ D $.  Let $ L^1(\left[t_0,+\infty\right)) $  denote the family of integrable functions on $  \left[t_0,+\infty\right) $.

 Let $ \varphi :\mathbb{R}^n\to \mathbb{R} $ be a real-valued function. The conjugate function of $\varphi$ is defined as
  $$ \varphi^* (\omega) \coloneqq \mbox{sup} \{ \langle \omega, x\rangle -
 \varphi(x)~|~x \in \mathbb{R}^n\},~\omega\in \mathbb{R}^n. $$
 We say that $\varphi$ is $ L_{\varphi} $-smooth iff $ \varphi $ is differentiable and $ \nabla \varphi $ is Lipschitz continuous with a Lipschitz constant $ L_{\varphi}\geq 0 $, i.e.,
 $$ \|\nabla \varphi(x_1)-\nabla \varphi(x_2)\|\leq L_{\varphi} \|x_1-x_2\|,~~~ \forall x_1,x_2\in \mathbb{R}^n.$$
   We say that  $ \varphi  $ is $ \epsilon $-strongly  convex function with a strong convexity parameter $ \epsilon\geq 0 $ iff $\varphi-\frac{\epsilon}{2}\|\cdot\|^2  $ is a convex function. Clearly,
   \begin{equation}\label{strong}
   \left\langle \nabla \varphi(x_1),x_2-x_1 \right\rangle \leq \varphi(x_2)-\varphi(x_1)-\frac{\epsilon}{2}\|x_1-x_2\|^2, ~~~ \forall x_1, x_2\in \mathbb{R}^n.
   \end{equation}

Consider the saddle point problem (\ref{PD}), we say that a pair $ (x^*,y^*) \in \mathbb{R}^n\times \mathbb{R}^m$ is a saddle point of the Lagrangian function $ \mathcal{L} $ iff
 \begin{equation}\label{Saddlepoint}
 \mathcal{L}(x^*,y)\leq \mathcal{L}(x^*,y^*) \leq \mathcal{L}(x,y^*),~~\forall (x,y)\in\mathbb{R}^n\times\mathbb{R}^m.
 \end{equation}
We  denote by $ \Omega $ the set of saddle points of $\mathcal{L}$, and assume that $ \Omega\neq\emptyset $. Clearly,  $(x^*,y^*)\in \Omega  $ if and only if it is a KKT point of  problem (\ref{PD}) in the sense that
 \begin{equation}\label{op}
 \begin{cases}
 \nabla f(x^*)+K^*y^*=0,\\
 \nabla g(y^*)-Kx^*=0.
 \end{cases}
 \end{equation}

The following important property will be used in the sequel.
\begin{lemma}\textup{\cite[Lemma A.3]{Attouch2018C}}   \label{psi}
Suppose that $ \xi>0 $, $ \phi\in L^1(\left[\xi,+\infty\right)) $ is a nonnegative  continuous function, and $ \psi:\left[\xi,+\infty \right) \to \left(0,+\infty \right)$ is a nondecreasing function with $ \lim_{t\to +\infty}\psi(t)=+\infty. $ Then,
\begin{equation*}
\lim_{t\to +\infty} \frac{1}{\psi(t)}\int_{\xi}^{t}\psi(s)\phi(s)ds=0.
\end{equation*}
\end{lemma}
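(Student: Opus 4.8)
The plan is to establish the limit by a standard ``head-and-tail'' splitting argument, leveraging the three hypotheses in exactly the right places: the integrability $\phi\in L^1([\xi,+\infty))$ controls the tail of $\int\phi$, the monotonicity of $\psi$ lets me replace the weight $\psi(s)$ by its largest value $\psi(t)$ on a terminal interval, and the divergence $\psi(t)\to+\infty$ kills any fixed head contribution. First I would fix an arbitrary $\varepsilon>0$. Since $\phi\in L^1([\xi,+\infty))$, there is a threshold $T\geq\xi$ with $\int_T^{+\infty}\phi(s)\,ds<\varepsilon$. For $t>T$ I split
\[
\frac{1}{\psi(t)}\int_{\xi}^{t}\psi(s)\phi(s)\,ds
=\frac{1}{\psi(t)}\int_{\xi}^{T}\psi(s)\phi(s)\,ds
+\frac{1}{\psi(t)}\int_{T}^{t}\psi(s)\phi(s)\,ds.
\]

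For the head term, the quantity $C_T:=\int_{\xi}^{T}\psi(s)\phi(s)\,ds$ is a finite constant (a continuous integrand over a compact interval), so since $\psi(t)\to+\infty$ we have $C_T/\psi(t)\to 0$; in particular this term is below $\varepsilon$ once $t$ is large enough. For the tail term I would use that $\psi$ is nondecreasing: for $s\in[T,t]$ we have $\psi(s)\leq\psi(t)$, whence
\[
\frac{1}{\psi(t)}\int_{T}^{t}\psi(s)\phi(s)\,ds
\leq \frac{1}{\psi(t)}\int_{T}^{t}\psi(t)\phi(s)\,ds
=\int_{T}^{t}\phi(s)\,ds
\leq \int_{T}^{+\infty}\phi(s)\,ds<\varepsilon,
\]
where the nonnegativity of $\phi$ (and $\psi>0$) guarantees that the integrals are well defined and that the inequalities preserve sign.

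Combining the two estimates, for all sufficiently large $t$ the left-hand side lies between $0$ and $2\varepsilon$; since $\varepsilon>0$ was arbitrary and the expression is nonnegative, the limit is $0$. I do not anticipate a genuine obstacle here, as this is a soft-analysis fact, but the one point requiring care is the ordering of the two limiting operations: the split point $T$ is chosen first to make the tail small uniformly in $t$, and only afterwards is $t\to+\infty$ sent to annihilate the now-fixed head. The monotonicity of $\psi$ is the load-bearing hypothesis, since it is what converts the weighted tail integral into the unweighted $L^1$ tail; without it one would instead have to resort to a Stolz--Cesàro / l'Hôpital-type argument.
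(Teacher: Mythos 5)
Your proof is correct, and it is the standard argument for this fact; the paper itself gives no proof, simply citing Lemma A.3 of Attouch, Chbani and Riahi, where essentially the same head--tail splitting is used. One cosmetic point: you justify the finiteness of $C_T=\int_{\xi}^{T}\psi(s)\phi(s)\,ds$ by calling the integrand continuous, but $\psi$ is only assumed nondecreasing; the bound $\psi(s)\leq\psi(T)$ on $[\xi,T]$ together with the continuity of $\phi$ already gives $C_T\leq\psi(T)\int_{\xi}^{T}\phi(s)\,ds<+\infty$, so nothing is lost.
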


Using a similar argument as that given in \cite[Section 4.1]{AttouchChbani}, we can easily get the following existence and uniqueness of the global solution of the dynamical system (\ref{dyn}).
\begin{proposition}
Suppose that $   f $ is $ L_f $-smooth on $ \mathbb{R}^n $ with $ L_f>0 $ and $   g $ is $ L_g $-smooth on $ \mathbb{R}^m $ with $ L_g>0 $. Then, for any given initial condition $ (x(t_0),y(t_0),\dot{x}(t_0),\dot{y}(t_0))\in \mathbb{R}^n\times \mathbb{R}^m\times \mathbb{R}^n\times \mathbb{R}^m $, the dynamical system \textup{(\ref{dyn})} has a unique global solution.
\end{proposition}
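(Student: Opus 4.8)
The plan is to rewrite the second-order system \eqref{dyn} as an equivalent first-order system and then invoke the Cauchy--Lipschitz (Picard--Lindel\"of) theorem for local existence and uniqueness, upgrading afterwards to a global solution by an a priori bound that excludes finite-time blow-up. First I would introduce the velocity variables $u=\dot{x}$, $v=\dot{y}$ and set $Z=(x,y,u,v)\in\mathbb{R}^n\times\mathbb{R}^m\times\mathbb{R}^n\times\mathbb{R}^m$. Exploiting the bilinear structure $\nabla_x\mathcal{L}(x,y)=\nabla f(x)+K^*y$ and $\nabla_y\mathcal{L}(x,y)=Kx-\nabla g(y)$, the system \eqref{dyn} is equivalent to $\dot{Z}(t)=F(t,Z(t))$ with
\begin{equation*}
F(t,Z)=\begin{pmatrix} u \\[2pt] v \\[2pt] -\frac{\alpha}{t^q}u-\beta(t)\left(\nabla f(x)+K^*y+\frac{t^q}{\alpha-1}K^*v+\frac{c}{t^p}x\right) \\[4pt] -\frac{\alpha}{t^q}v+\beta(t)\left(Kx+\frac{t^q}{\alpha-1}Ku-\nabla g(y)-\frac{c}{t^p}y\right)\end{pmatrix}.
\end{equation*}
The crucial observation, which I would emphasize, is that the only \emph{nonlinear} dependence of $F$ on $Z$ enters through $\nabla f(x)$ and $\nabla g(y)$; the extrapolation terms couple the velocities only linearly (through $K^*v$ and $Ku$), because the $y$-dependence of $\nabla_x\mathcal{L}$ and the $x$-dependence of $\nabla_y\mathcal{L}$ are themselves linear.

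Next I would establish local existence and uniqueness. Fix any $T>t_0$. On the strip $[t_0,T]\times\mathbb{R}^{2(n+m)}$ the map $F$ is continuous in $t$, since $t\mapsto t^{-q}$, $t^{-p}$, $t^{q}$ are continuous on $[t_0,T]\subset(0,+\infty)$ and $\beta$ is continuous by hypothesis. Moreover $F(t,\cdot)$ is globally Lipschitz on $\mathbb{R}^{2(n+m)}$, uniformly for $t\in[t_0,T]$: the terms $\nabla f(x)$ and $\nabla g(y)$ are $L_f$- and $L_g$-Lipschitz by the smoothness assumption, while every remaining term is affine in $Z$ with coefficients bounded on $[t_0,T]$ (using the operator norm $\|K\|=\|K^*\|$ and $\sup_{[t_0,T]}\beta<+\infty$). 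Hence there is a Lipschitz constant $\ell_T$ depending only on $T$, and the Cauchy--Lipschitz theorem yields a unique maximal solution $Z\in C^1\big([t_0,T_{\max}),\mathbb{R}^{2(n+m)}\big)$; since $\dot u,\dot v$ are then continuous, $(x,y)$ is a unique maximal classical ($C^2$) solution of \eqref{dyn}.

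Finally I would prove that $T_{\max}=+\infty$. Arguing by contradiction, suppose $T_{\max}<+\infty$. The estimate above furnishes an affine bound $\|F(t,Z)\|\le a_T\|Z\|+b_T$ valid for all $t\in[t_0,T_{\max}]$ and all $Z$, with finite constants $a_T,b_T$. Integrating $\dot Z=F(t,Z)$ and applying Gr\"onwall's inequality then gives $\sup_{[t_0,T_{\max})}\|Z(t)\|<+\infty$, contradicting the blow-up alternative, which asserts that $\|Z(t)\|\to+\infty$ as $t\uparrow T_{\max}$ whenever $T_{\max}$ is finite. Therefore $T_{\max}=+\infty$ and the solution is global, as claimed. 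The only mildly delicate point in the whole argument is ensuring that the time-dependent coefficients $\frac{\alpha}{t^q}$, $\frac{t^q}{\alpha-1}$ and $\frac{c}{t^p}$ remain bounded on each compact interval $[t_0,T]$; this is exactly where the standing assumptions $t_0>0$ (so the negative powers of $t$ do not blow up near $t_0$) and the continuous differentiability of $\beta$ are used. Once this is in hand, both the Lipschitz estimate and the Gr\"onwall bound are routine, and the reasoning parallels \cite[Section 4.1]{AttouchChbani}.
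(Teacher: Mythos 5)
Your argument is correct and is precisely the standard reasoning the paper invokes: the authors give no details, deferring to \cite[Section 4.1]{AttouchChbani}, which is exactly this reduction to a first-order system $\dot Z=F(t,Z)$ with $F(t,\cdot)$ globally Lipschitz (uniformly on compact time intervals, thanks to the $L_f$-, $L_g$-smoothness and the purely linear coupling through $K$) followed by Cauchy--Lipschitz and a Gr\"onwall bound ruling out finite-time blow-up. Your write-up simply fills in the details the paper omits.
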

\section{Convergence rates of the values}
In this section, we  establish  the  convergence rates  of  primal-dual gap and velocity along the trajectory $ (x(t),y(t))$ generated by the dynamical system \textup{(\ref{dyn})}. For convenience,  we denote the partial derivative of $ \mathcal{L} $ with
 respect to the first argument by $ \nabla_{x} \mathcal{L} $, and with respect to the second argument by $ \nabla_{y}\mathcal{L}. $

 To start with, we study   the \emph{fast} convergence rates  of  primal-dual gap and velocity under the hypothesis of $ \int_{t_0}^{+\infty}  t^{q-p}\beta(t) dt<+\infty $,  which means   $ \frac{c}{t^p}\beta(t) $    decreases rapidly to zero.
\begin{theorem}\label{Th1}
Let $ (x(t),y(t))_{t\geq t_0} $ be a global solution of the dynamical system \textup{(\ref{dyn})}. Suppose that  for any $ t\geq t_0 $,
\begin{equation}\label{assp1}
\frac{\dot{\beta}(t)}{\beta(t)}\leq \frac{\alpha-1}{t^q}-\frac{2q}{t},
\end{equation}
\begin{equation}\label{assp2}
\frac{q(1-q)}{c}\leq t^{2-p}  \beta(t) ,
\end{equation}and
\begin{equation}\label{assp3}
\int_{t_0}^{+\infty}  t^{q-p}\beta(t) dt<+\infty.
\end{equation}
Then, for any $ (x^*,y^*)\in \Omega $, the trajectory $ (x(t),y(t))_{t\geq t_0} $ is bounded and that
\begin{equation*}
\mathcal{L}(x(t),y^*)-\mathcal{L}(x^*,y(t))=\mathcal{O}\left(\frac{1}{t^{2q}\beta(t)}\right),~~~as ~  t\to +\infty ,
\end{equation*}
\begin{equation*}
\|\dot{x}(t)\|=\mathcal{O}\left( \frac{1}{t^q}\right),~\|\dot{y}(t)\|=\mathcal{O}\left( \frac{1}{t^q}\right),~~~as ~  t\to +\infty ,
\end{equation*}
and
\begin{equation*}
\int_{t_0}^{+\infty}t^q\left( \|\dot{x}(t)\|^2+   \|\dot{y}(t)\|^2        \right)dt< +\infty.
\end{equation*}
\end{theorem}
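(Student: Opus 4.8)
The plan is to run a Lyapunov (energy) argument. First I would exploit the bilinear structure, writing $\nabla_x\mathcal L(x,y)=\nabla f(x)+K^*y$ and $\nabla_y\mathcal L(x,y)=Kx-\nabla g(y)$, so that the extrapolated arguments in (\ref{dyn}) produce exactly the cross-velocity terms $\tfrac{t^q}{\alpha-1}K^*\dot y$ and $\tfrac{t^q}{\alpha-1}K\dot x$. Abbreviating the gap by $\mathcal G(t)=\mathcal L(x(t),y^*)-\mathcal L(x^*,y(t))\ge 0$, I would record the convexity estimate
\[
\langle\nabla_x\mathcal L(x,y),x-x^*\rangle-\langle\nabla_y\mathcal L(x,y),y-y^*\rangle\ \ge\ \mathcal G(t),
\]
which follows from convexity of $f,g$ together with the KKT relations (\ref{op}); this is the device that turns the ``position $\times$ gradient'' contributions into the gap.

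Next I would introduce, with $h(t)=\tfrac12(\|x-x^*\|^2+\|y-y^*\|^2)$ and $\sigma(t)=\tfrac12(\|x\|^2+\|y\|^2)$, the energy
\[
\mathcal E(t)=t^{2q}\beta\,\mathcal G(t)+\tfrac12\|(\alpha-1)(x-x^*)+t^q\dot x\|^2+\tfrac12\|(\alpha-1)(y-y^*)+t^q\dot y\|^2+(\alpha-1)(1-qt^{q-1})h(t)+ct^{2q-p}\beta\,\sigma(t).
\]
Differentiating and substituting $\ddot x,\ddot y$ from (\ref{dyn}), I expect the following cancellations. The velocity--velocity cross terms $\pm\tfrac{t^{3q}\beta}{\alpha-1}\langle K\dot x,\dot y\rangle$ cancel because of the symmetric extrapolation; every remaining bilinear term coupling positions, velocities and $K$ collapses after inserting $\nabla f(x^*)+K^*y^*=0$ and $Kx^*-\nabla g(y^*)=0$; and the surviving diagonal ``position $\times$ gradient'' part is bounded above, via the convexity estimate, by $-(\alpha-1)t^q\beta\,\mathcal G(t)$. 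The anchor term $(\alpha-1)(1-qt^{q-1})h$ is tuned precisely so that its derivative cancels the non-sign-definite coupling $(\alpha-1)(qt^{q-1}-1)\dot h$ coming from the squared terms, leaving only $(\alpha-1)q(1-q)t^{q-2}h$; similarly $ct^{2q-p}\beta\sigma$ is tuned to cancel the regularization velocity term $-ct^{2q-p}\beta\dot\sigma$.

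After these steps $\dot{\mathcal E}$ should collapse to a sum in which each of the three hypotheses plays one role: the coefficient $\big(2qt^{2q-1}\beta+t^{2q}\dot\beta-(\alpha-1)t^q\beta\big)\mathcal G(t)$ is $\le 0$ exactly by (\ref{assp1}); the term $-t^q(1-qt^{q-1})(\|\dot x\|^2+\|\dot y\|^2)$ is negative for large $t$; the Tikhonov dissipation $-2c(\alpha-1)t^{q-p}\beta\,h$ dominates the remainder $(\alpha-1)q(1-q)t^{q-2}h$ precisely under (\ref{assp2}) (with half of it absorbing, via Young's inequality, the cross term $c(\alpha-1)t^{q-p}\beta\langle x-x^*,x^*\rangle$); and every leftover of ``constant'' type is of order $t^{q-p}\beta$, which is integrable by (\ref{assp3}). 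Since $\mathcal E$ bounds $h$ from below (the factor $(\alpha-1)(1-qt^{q-1})\ge\tfrac{\alpha-1}{2}$ for $t$ large and all other summands are nonnegative), these leftovers produce a differential inequality $\dot{\mathcal E}(t)\le\varphi(t)\big(1+\mathcal E(t)\big)$ with $\varphi\in L^1([t_0,+\infty))$, and Gronwall's lemma yields boundedness of $\mathcal E$.

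From $\sup_t\mathcal E(t)<+\infty$ the conclusions follow: boundedness of $h$ gives boundedness of the trajectory; $t^{2q}\beta\,\mathcal G(t)\le\mathcal E(t)$ gives $\mathcal G(t)=\mathcal O(1/(t^{2q}\beta))$; the bound on $\|(\alpha-1)(x-x^*)+t^q\dot x\|$ combined with the boundedness of $\|x-x^*\|$ gives $\|\dot x\|,\|\dot y\|=\mathcal O(1/t^q)$; and integrating the negative term $-t^q(1-qt^{q-1})(\|\dot x\|^2+\|\dot y\|^2)$ against the now-bounded $\mathcal E$ yields $\int_{t_0}^{+\infty}t^q(\|\dot x\|^2+\|\dot y\|^2)\,dt<+\infty$. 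The main obstacle I anticipate is not the bilinear cancellations, which are essentially forced by the extrapolation and the KKT conditions, but the bookkeeping of the Tikhonov perturbations: they enter symmetrically in both variables and do not vanish, so the energy must be augmented with exactly the right anchor and scaling corrections, and the three hypotheses must be made to line up simultaneously so that each non-sign-definite remainder is either absorbed by the dissipation or rendered $L^1$.
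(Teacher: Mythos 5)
Your proposal is correct and follows essentially the same route as the paper: the energy you write down is exactly the paper's $\mathcal{E}=\mathcal{E}_1+\mathcal{E}_2+\mathcal{E}_3$ (your $ct^{2q-p}\beta\sigma$ and $(\alpha-1)(1-qt^{q-1})h$ are the paper's Tikhonov and anchor terms), and hypotheses (\ref{assp1})--(\ref{assp3}) are used in precisely the roles you assign them. The only cosmetic differences are that the paper absorbs the cross term via the three-point identity $\langle x,x-x^*\rangle=\tfrac12(\|x\|^2-\|x^*\|^2+\|x-x^*\|^2)$ (built into the strong-convexity inequality) rather than Young's inequality, and obtains the sharper $\dot{\mathcal{E}}\le\varphi$ with $\varphi\in L^1$ so that direct integration replaces Gronwall.
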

\begin{proof}

 For any fixed $ (x^*,y^*)\in \Omega $, we define the energy function $ \mathcal{E}:\left[t_0,+\infty \right) \to \mathbb{R} $ as
\begin{equation}\label{def1}
\mathcal{E}(t)=\mathcal{E}_1(t)+\mathcal{E}_2(t)+\mathcal{E}_3(t)
\end{equation}with
\begin{equation*}
\begin{cases}
\mathcal{E}_1(t)=t^{2q}\beta(t)\left(\mathcal{L}(x(t),y^*)-\mathcal{L}(x^*,y(t))+\frac{c}{2t^p}(\|x(t)\|^2+\|y(t)\|^2)\right),\\
\mathcal{E}_2(t)=\frac{1}{2}\|(\alpha-1)(x(t)-x^*)+t^q \dot{x}(t)\|^2+\frac{\alpha-1}{2}(1-qt^{q-1})\|x(t)-x^*\|^2,\\
\mathcal{E}_3(t)=\frac{1}{2}\|(\alpha-1)(y(t)-y^*)+t^q \dot{y}(t)\|^2+\frac{\alpha-1}{2}(1-qt^{q-1})\|y(t)-y^*\|^2.
\end{cases}
\end{equation*}
  Firstly,
\begin{equation}\label{5}
\begin{array}{ll}
\dot{\mathcal{E}}_1(t)=&(2qt^{2q-1}\beta(t)+t^{2q}\dot{\beta}(t))\bigg(\mathcal{L}(x(t),y^*)-\mathcal{L}(x^*,y(t))+\frac{c}{2t^p} \left( \| x(t) \|^2+ \| y(t) \|^2\right)\bigg)\\
&+t^{2q}\beta(t) \bigg(\left \langle \nabla_x\mathcal{L}(x(t), y^*),\dot{x}(t)\right \rangle-\left \langle \nabla_y\mathcal{L}(x^*, y(t)),\dot{y}(t)\right \rangle\bigg.\\
&\bigg.-\frac{cp}{2t^{p+1}} \left( \| x(t) \|^2 +\|y(t)\|^2\right) +\frac{c}{t^p}(\left \langle x(t),\dot{x}(t)\right \rangle+\left \langle y(t),\dot{y}(t)\right \rangle)\bigg).
\end{array}
\end{equation}
Next, we consider the function $ \mathcal{E}_2(t) $. Let $ \mu(t)\coloneqq(\alpha-1)(x(t)-x^*)+t^q \dot{x}(t) $. Then,
\begin{equation}\label{r1}
\dot{\mu}(t)=(\alpha-1)\dot{x}(t)+qt^{q-1}\dot{x}(t)+t^q\ddot{x}(t).
\end{equation}
 Note that
\begin{equation*}
\nabla_{x}\mathcal{L}\left(x(t),y(t)+\frac{t^q}{\alpha-1}\dot{y}(t)\right)=\nabla_{x}\mathcal{L}(x(t),y^*)+K^*\left(y(t)-y^*+\frac{t^q}{\alpha-1}\dot{y}(t)\right).
\end{equation*}
Then, we deduce from (\ref{r1}) and the first equation of the dynamical system (\ref{dyn}) that
\begin{equation*}
\dot{\mu}(t)=(qt^{q-1}-1)\dot{x}(t)-t^{q}\beta(t)\left(\nabla_{x}\mathcal{L}(x(t),y^*)+K^*\left(y(t)-y^*+\frac{t^q}{\alpha-1}\dot{y}(t)\right)+\frac{c}{t^p}x(t)\right).
\end{equation*}
Therefore,
\begin{equation}\label{22}
\begin{array}{rl}
&\left \langle \mu(t),\dot{\mu}(t)\right \rangle\\
=&\left \langle (\alpha-1)(x(t)-x^*)+t^q\dot{x}(t),(qt^{q-1}-1)\dot{x}(t)-t^{q}\beta(t)\bigg(\nabla_{x}\mathcal{L}(x(t),y^*)\bigg.\right.\\
&\left. \bigg. +K^*\left(y(t)-y^*+\frac{t^q}{\alpha-1}\dot{y}(t)\right) +\frac{c}{t^p}x(t) \bigg) \right \rangle\\
=&(\alpha-1)(qt^{q-1}-1)\left \langle x(t)-x^*,\dot{x}(t)\right \rangle+t^q(qt^{q-1}-1)\| \dot{x}(t)\|^2\\
&-(\alpha-1)t^q\beta(t)\left \langle\nabla_x\mathcal{L}(x(t), y^*)+\frac{c}{t^p}x(t),x(t)-x^*\right \rangle\\
&-t^{2q}\beta(t)\left \langle\nabla_x\mathcal{L}(x(t), y^*)+\frac{c}{t^p}x(t),\dot{x}(t)\right \rangle\\
&-t^q\beta(t)\left \langle (\alpha-1)(x(t)-x^*)+t^q\dot{x}(t),K^*\left(y(t)-y^*+\frac{t^q}{\alpha-1}\dot{y}(t)\right)\right \rangle\\
\leq&(\alpha-1)(qt^{q-1}-1)\left \langle x(t)-x^*,\dot{x}(t)\right \rangle+t^q(qt^{q-1}-1)\| \dot{x}(t)\|^2\\
&-(\alpha-1)t^{q}\beta(t)\bigg(\mathcal{L}(x(t),y^*)-\mathcal{L}(x^*,y^*)+\frac{c}{2t^p}\left(\|x(t)\|^2-\|x^*\|^2+\|x(t)-x^*\|^2\right)\bigg)\\
&-t^{2q}\beta(t)\left \langle\nabla_x\mathcal{L}(x(t), y^*)+\frac{c}{t^p}x(t),\dot{x}(t)\right \rangle\\
&-t^q\beta(t)\left \langle (\alpha-1)(x(t)-x^*)+t^q\dot{x}(t),K^*\left(y(t)-y^*+\frac{t^q}{\alpha-1}\dot{y}(t)\right)\right \rangle,
\end{array}
\end{equation}
where the last inequality follows from (\ref{strong}) with $ \varphi=\mathcal{L}(\cdot,y^*)+\frac{\epsilon}{2}\|\cdot\|^2 $ and $\epsilon=\frac{c}{t^p}$.
Moreover, we have
\begin{equation}\label{e2d}\begin{array}{rl}
&\frac{d}{dt}\left( \frac{\alpha-1}{2}(1-qt^{q-1})\|x(t)-x^*\|^2 \right)\\
=&\frac{(\alpha-1)q(1-q)}{2}t^{q-2}\|x(t)-x^*\|^2+(\alpha-1)(1-qt^{q-1})\left \langle x(t)-x^*,\dot{x}(t)\right \rangle.
\end{array}
\end{equation}
By (\ref{22}) and (\ref{e2d}),  we obtain
\begin{equation}\label{6}
\resizebox{0.93\hsize}{!}{$
\begin{array}{rl}
\dot{\mathcal{E}}_2(t)\leq &t^q(qt^{q-1}-1)\| \dot{x}(t)\|^2-(\alpha-1)t^{q}\beta(t)\bigg(\mathcal{L}(x(t),y^*)-\mathcal{L}(x^*,y^*)+\frac{c}{2t^p} \left(\|x(t)\|^2-\|x^*\|^2\right)\bigg)\\
&+\left(          \frac{(\alpha-1)q(1-q)}{2}t^{q-2}-\frac{(\alpha-1)c}{2}t^{q-p}\beta(t)\right)         \|x(t)-x^*\|^2    -t^{2q}\beta(t)\left \langle\nabla_x\mathcal{L}(x(t), y^*)+\frac{c}{t^p}x(t),\dot{x}(t)\right \rangle\\
&-t^q\beta(t)\left \langle (\alpha-1)(x(t)-x^*)+t^q\dot{x}(t),K^*\left(y(t)-y^*+\frac{t^q}{\alpha-1}\dot{y}(t)\right)\right \rangle.
\end{array}$}
\end{equation}
Similarly,
\begin{equation}\label{7}
\resizebox{0.93\hsize}{!}{$
\begin{array}{rl}
\dot{\mathcal{E}}_3(t)\leq &t^q(qt^{q-1}-1)\| \dot{y}(t)\|^2-(\alpha-1)t^{q}\beta(t)\bigg(\mathcal{L}(x^*,y^*)-\mathcal{L}(x^*,y(t))+\frac{c}{2t^p} \left(\|y(t)\|^2-\|y^*\|^2\right)\bigg)\\
&+\left(          \frac{(\alpha-1)q(1-q)}{2}t^{q-2}-\frac{(\alpha-1)c}{2}t^{q-p}\beta(t)\right)         \|y(t)-y^*\|^2  +t^{2q}\beta(t)\left \langle\nabla_y\mathcal{L}(x^*, y(t))-\frac{c}{t^p}y(t),\dot{y}(t)\right \rangle\\
&+t^q\beta(t)\left \langle (\alpha-1)(y(t)-y^*)+t^q\dot{y}(t),K\left(x(t)-x^*+\frac{t^q}{\alpha-1}\dot{x}(t)\right)\right \rangle.
\end{array}$}
\end{equation}
Thus, by (\ref{5}), (\ref{6}) and (\ref{7}), we get
\begin{equation}\label{dt}
\resizebox{0.93\hsize}{!}{$
\begin{array}{rl}
\dot{\mathcal{E}}(t)\leq&\left( 2qt^{2q-1}\beta(t)+t^{2q}\dot{\beta}(t)-(\alpha-1)t^{q}\beta(t)\right)(\mathcal{L}(x(t),y^*)-\mathcal{L}(x^*,y(t)))\\
&+\left( (2qt^{2q-1}\beta(t)+t^{2q}\dot{\beta}(t))\frac{c}{2t^p}-\frac{cp}{2}t^{2q-p-1}\beta(t)-\frac{(\alpha-1)c}{2}t^{q-p}\beta(t)\right)\left(\|x(t)\|^2+\|y(t)\|^2\right) \\
&+\left(          \frac{(\alpha-1)q(1-q)}{2}t^{q-2} -\frac{(\alpha-1)c}{2}t^{q-p}\beta(t)   \right)        ( \|x(t)-x^*\|^2+\|y(t)-y^*\|^2)\\
&+t^q(qt^{q-1}-1)(\| \dot{x}(t)\|^2+\| \dot{y}(t)\|^2)+\frac{(\alpha-1)c}{2}t^{q-p}\beta(t)\left(\|x^*\|^2+\|y^*\|^2\right).
\end{array}$}
\end{equation}
Now, let us evaluate the coefficients on the right of   inequality (\ref{dt}). By  (\ref{assp1}), we have
\begin{equation}\label{r2}
 2qt^{2q-1}\beta(t)+t^{2q}\dot{\beta}(t)-(\alpha-1)t^{q}\beta(t)\leq 0,~~~\forall ~ t\geq t_0.
\end{equation}
This also implies
\begin{equation}\label{r3}
(2qt^{2q-1}\beta(t)+t^{2q}\dot{\beta}(t))\frac{c}{2t^p}-\frac{cp}{2}t^{2q-p-1}\beta(t)-\frac{(\alpha-1)c}{2}t^{q-p}\beta(t)\leq0,~~~\forall ~ t\geq t_0.
\end{equation}
By  (\ref{assp2}), we deduce that
\begin{equation}\label{r4}
 \frac{(\alpha-1)q(1-q)}{2}t^{q-2} -\frac{(\alpha-1)c}{2}t^{q-p}\beta(t)  \leq 0,~~~\forall ~ t\geq t_0.
\end{equation}
Moreover, since $0<q<1 $, there exists $ t_1>0 $ such that
\begin{equation}\label{r5}
t^q(qt^{q-1}-1)\leq0,~~~\forall ~t\geq t_1.
\end{equation}
Note that $ \mathcal{L}(x(t),y^*)-\mathcal{L}(x^*,y(t))\geq0 $. Then, combining (\ref{dt}) with (\ref{r2}), (\ref{r3}), (\ref{r4}) and (\ref{r5}), we get
\begin{equation*}\label{int}
\dot{\mathcal{E}}(t)\leq\frac{(\alpha-1)c}{2}t^{q-p}\beta(t)\left(\|x^*\|^2+\|y^*\|^2\right),~~~\forall ~ t\geq T_1,
\end{equation*}where $ T_1=\max\{t_0,t_1\} $. Integrating  it from $ T_1 $ to $ t $, we obtain
\begin{equation*}
\mathcal{E}(t)\leq \mathcal{E}(T_1)+\int_{T_1}^{t} \frac{(\alpha-1)c}{2}s^{q-p}\beta(s)\left(\|x^*\|^2+\|y^*\|^2\right)ds.
\end{equation*}This, together with  (\ref{assp3}), yields that there exists   $ M_1\geq 0 $ such that
\begin{equation*}
  \mathcal{E}(t)   \leq M_1,~~~\forall ~ t\geq T_1.
\end{equation*} By (\ref{def1}), we have
\begin{equation*}\label{Corrolary}
t^{2q}\beta(t)(\mathcal{L}(x(t),y^*)-\mathcal{L}(x^*,y(t))\leq \mathcal{E}(t) \leq M_1, ~~~\forall ~ t\geq T_1,
\end{equation*}
which implies
\begin{equation*}
\mathcal{L}(x(t),y^*)-\mathcal{L}(x^*,y(t))=\mathcal{O}\left( \frac{1}{t^{2q}\beta(t)}\right),~~~\mbox{as } ~ t\to +\infty.
\end{equation*}
Besides, from the  boundedness of  $ \mathcal{E}(t) $, it is clear that  $  \|(\alpha-1)(x(t)-x^*)+t^q\dot{x}(t)\|  $,
 $\|(\alpha-1)(y(t)-y^*)+t^q\dot{y}(t)\|$ and the trajectory $ (x(t),y(t))  $ are bounded for all $ t\geq t_0 $.

On the other hand,  from (\ref{dt}), we get
\begin{equation*}\label{r6}
\dot{\mathcal{E}}(t)+t^q(1-qt^{q-1})(\| \dot{x}(t)\|^2+\| \dot{y}(t)\|^2)
\leq\frac{(\alpha-1)c}{2}t^{q-p}\beta(t)\left(\|x^*\|^2+\|y^*\|^2\right), ~~~\forall ~t\geq t_0.
\end{equation*}
Since $0<q<1 $, there exists  $ t_2>0 $ such that $$ \frac{1}{2}\leq  1-qt^{q-1},~~~\forall~t\geq t_2.$$
 Then,
\begin{equation*}
\dot{\mathcal{E}}(t)+\frac{1}{2}t^q(\| \dot{x}(t)\|^2+\| \dot{y}(t)\|^2)
\leq\frac{(\alpha-1)c}{2}t^{q-p}\beta(t) \left(\|x^*\|^2+\|y^*\|^2\right),~~~\forall~ t\geq T_2,
\end{equation*}where   $ T_2\coloneqq\max\{t_0 ,t_2\} $.  Integrating it from $ T_2 $ to $ t $, we have
\begin{equation*}
\mathcal{E}(t)+\int_{T_2}^{t} \frac{1}{2}s^q(\| \dot{x}(s)\|^2+\| \dot{y}(s)\|^2)ds
\leq \mathcal{E}(T_2)+ \int_{T_2}^{t} \frac{(\alpha-1)c}{2}s^{q-p}\beta(s) \left(\|x^*\|^2+\|y^*\|^2\right)ds.
\end{equation*}
Combining this with (\ref{assp3}) and noting that $  \mathcal{E}(t)\geq 0 $, we have
\begin{equation*}
\int_{T_2}^{+\infty}t^q\left( \|\dot{x}(t)\|^2+   \|\dot{y}(t)\|^2        \right)dt< +\infty.
\end{equation*}
Furthermore, note that
\begin{equation*}
\|t^q\dot{x}(t)\|^2\leq 2\|t^q\dot{x}(t)+(\alpha-1)(x(t)-x^*)\|^2+2\|(\alpha-1)(x(t)-x^*)\|^2.
\end{equation*}Thus, from the boundedness of $  \|t^q\dot{x}(t)+(\alpha-1)(x(t)-x^*)\|  $ and  the trajectory $ (x(t),y(t)) _{t\geq t_0} $, we have
\begin{equation*}
\|\dot{x}(t)\|=\mathcal{O}\left( \frac{1}{t^q}\right), ~~~\mbox{as }~t  \to +\infty.
\end{equation*}
Similarly,
\begin{equation*}
\|\dot{y}(t)\|=\mathcal{O}\left( \frac{1}{t^q}\right), ~~~\mbox{as }~t  \to +\infty.
\end{equation*}
The proof is complete.\qed
\end{proof}
\begin{remark}
Clearly, the dynamical system (\ref{dyn}) shares a similar structure with the dynamical system introduced by \cite{Bot2024F}, i.e., time scaling technique is used on a  slowly damped system. Thus,  the condition (\ref{assp1}) on scaling parameter $ \beta(t) $ is similar to the condition (13) in \cite{Bot2024F}.
\end{remark}

In the case that $ \beta(t)=t^r $ with $ r>0 $, it is easy to verify that (\ref{assp1}) holds. Moreover, (\ref{assp2}) and (\ref{assp3}) become
\begin{equation}\label{assp2-c}
\frac{q(1-q)}{c}\leq t^{2-p+r}~\mbox{and}~\int_{t_0}^{+\infty}  t^{q-p+r} dt<+\infty,~\forall  t\geq t_0,
\end{equation} respectively.
Thus, by virtue of Theorem \ref{Th1}, the results in the following corollary can be easily established.
\begin{corollary}\label{Colloary1}
Let $ \beta(t)=t^r $ with $r>0 $ and $ (x(t),y(t))_{t\geq t_0} $ be a global solution of the dynamical system \textup{(\ref{dyn})}. Suppose that \textup{(\ref{assp2-c})} holds.
Then, for any  $ (x^*,y^*)\in\Omega $, the trajectory $ (x(t),y(t))_{t\geq t_0} $ is bounded and that
\begin{equation*}
\mathcal{L}(x(t),y^*)-\mathcal{L}(x^*,y(t))=\mathcal{O}\left(\frac{1}{t^{2q+r}}\right),~~~as ~  t\to +\infty ,
\end{equation*}
\begin{equation*}
\|\dot{x}(t)\|=\mathcal{O}\left( \frac{1}{t^q}\right),~\|\dot{y}(t)\|=\mathcal{O}\left( \frac{1}{t^q}\right),~~~as ~  t\to +\infty ,
\end{equation*}
and
\begin{equation*}
\int_{t_0}^{+\infty}t^q\left( \|\dot{x}(t)\|^2+   \|\dot{y}(t)\|^2        \right)dt< +\infty.
\end{equation*}
\end{corollary}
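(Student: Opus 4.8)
The plan is to obtain Corollary \ref{Colloary1} purely as a specialization of Theorem \ref{Th1}: once the power-type scaling $\beta(t)=t^r$ is shown to satisfy the three structural hypotheses (\ref{assp1})--(\ref{assp3}), all conclusions transfer immediately, and only the gap rate needs to be rewritten in terms of $r$.

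First I would verify (\ref{assp1}). Since $\beta(t)=t^r$ gives $\dot\beta(t)/\beta(t)=r/t$, condition (\ref{assp1}) is equivalent to $(r+2q)\,t^{q-1}\le\alpha-1$. Because $0<q<1$, the left-hand side is decreasing in $t$, so the inequality holds for every $t\ge t_0$ as soon as $t_0^{\,1-q}\ge (r+2q)/(\alpha-1)$; this is precisely the ``easy to verify'' remark made just before the corollary. Next I would check (\ref{assp2}) and (\ref{assp3}): substituting $\beta(t)=t^r$ turns $t^{2-p}\beta(t)$ into $t^{2-p+r}$ and $t^{q-p}\beta(t)$ into $t^{q-p+r}$, so these two hypotheses collapse exactly into the pair of conditions bundled in (\ref{assp2-c}). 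Hence all the assumptions of Theorem \ref{Th1} are in force.

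With the hypotheses verified, Theorem \ref{Th1} delivers boundedness of the trajectory, the velocity estimates $\|\dot x(t)\|,\|\dot y(t)\|=\mathcal{O}(t^{-q})$, and the integrability $\int_{t_0}^{+\infty}t^q(\|\dot x(t)\|^2+\|\dot y(t)\|^2)\,dt<+\infty$ without any change. The last step is simply to specialize the primal-dual gap rate: the general bound $\mathcal{O}(1/(t^{2q}\beta(t)))$ becomes $\mathcal{O}(1/(t^{2q}\cdot t^r))=\mathcal{O}(1/t^{2q+r})$. I do not expect any genuine obstacle in this argument; the only point demanding a moment's attention is the uniform-in-$t$ validity of (\ref{assp1}), which relies on the monotonicity of $t^{q-1}$ for $0<q<1$ together with a suitable lower bound on $t_0$.
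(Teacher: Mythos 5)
Your proposal is correct and follows essentially the same route as the paper: verify that $\beta(t)=t^r$ reduces hypotheses (\ref{assp1})--(\ref{assp3}) to (\ref{assp2-c}) and then invoke Theorem \ref{Th1} with $\beta(t)=t^r$ substituted into the gap rate. Your explicit remark that (\ref{assp1}) requires $t_0^{1-q}\ge (r+2q)/(\alpha-1)$ (or equivalently holds only for sufficiently large $t$) is in fact slightly more careful than the paper's bare assertion that (\ref{assp1}) ``is easy to verify.''
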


In the slow vanishing  case, i.e.,   $ \int_{t_0}^{+\infty} t^{-q-p}\beta(t)dt<+\infty  $, we now analyze the convergence  properties of the trajectory  $ (x(t),y(t))$ generated by the  dynamical system (\ref{dyn}).

\begin{theorem}\label{slow} Let $ (x(t),y(t))_{t\geq t_0} $ be a global solution of the dynamical system \textup{(\ref{dyn})}. Suppose that
\begin{equation}\label{assp11}
\exists  M>0  ~ \textup{such that}~\frac{\dot{\beta}(t)}{\beta(t)}\leq \frac{\alpha-1}{t^q}-\frac{M}{t},~\forall~t\geq t_0,
\end{equation}
and
\begin{equation}\label{assump4}
\int_{t_0}^{+\infty} t^{-q-p}\beta(t)dt<+\infty.
\end{equation}
Then,  for any  $ (x^*,y^*)\in\Omega $,
\begin{equation}\label{o}
\mathcal{L}(x(t),y^*)-\mathcal{L}(x^*,y(t))=o\left( \frac{1}{\beta(t)}\right),~~~\mbox{as}~ t\to+\infty,
\end{equation}
\begin{equation*}
\lim_{t\to +\infty}\|\frac{\alpha-1}{t^q}(x(t)-x^*)+ \dot{x}(t)\|^2=0,
\end{equation*}
and
\begin{equation*}
\lim_{t\to +\infty}\|\frac{\alpha-1}{t^q}(y(t)-y^*)+ \dot{y}(t)\|^2=0.
\end{equation*}
\end{theorem}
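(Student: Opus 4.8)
The plan is to adapt the Lyapunov analysis of Theorem~\ref{Th1} to the slow-vanishing regime, replacing the weight $t^{2q}\beta(t)$ on the gap by the lighter weight $\beta(t)$ and renormalizing the velocity terms so that the desired limits appear directly in the energy. Writing $W(t)\coloneqq \mathcal{L}(x(t),y^*)-\mathcal{L}(x^*,y(t))+\tfrac{c}{2t^p}(\|x(t)\|^2+\|y(t)\|^2)\ge 0$, $a(t)\coloneqq\tfrac{\alpha-1}{t^q}(x(t)-x^*)+\dot x(t)$ and $b(t)\coloneqq\tfrac{\alpha-1}{t^q}(y(t)-y^*)+\dot y(t)$, I would set
\begin{equation*}
\mathcal{E}(t)=\beta(t)W(t)+\tfrac12\|a(t)\|^2+\tfrac12\|b(t)\|^2+\mathcal{C}(t),
\end{equation*}
where $\mathcal{C}(t)$ is a correction quadratic in $x(t)-x^*$ and $y(t)-y^*$, weighted by a negative power of $t$, chosen to absorb the residual non-integrable quadratic terms produced by differentiation. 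The point of this choice is that $\|a\|^2,\|b\|^2$ are exactly the quantities whose vanishing is asserted in (\ref{o}), while $\beta W\ge 0$ dominates $\beta$ times the gap, so controlling $\mathcal{E}$ yields all three conclusions at once.

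The second step is the differential inequality. Differentiating $\mathcal{E}$ and inserting (\ref{dyn}) into $\dot a,\dot b$, I use $\nabla_{x}\mathcal{L}(x,y+\tfrac{t^q}{\alpha-1}\dot y)=\nabla_x\mathcal{L}(x,y^*)+K^*(y-y^*+\tfrac{t^q}{\alpha-1}\dot y)$ together with the adjoint relation $\langle Ka,b\rangle=\langle a,K^*b\rangle$ to cancel the bilinear coupling between the $x$- and $y$-blocks, exactly as in the passage leading to (\ref{6})--(\ref{7}). Splitting $a=\tfrac{\alpha-1}{t^q}(x-x^*)+\dot x$, the term $\beta\langle\dot x,\nabla_x\mathcal{L}(x,y^*)+\tfrac{c}{t^p}x\rangle$ arising in $\langle a,\dot a\rangle$ cancels the corresponding term in $\beta\dot W$, while the remaining $\tfrac{\alpha-1}{t^q}\langle x-x^*,\nabla_x\mathcal{L}(x,y^*)+\tfrac{c}{t^p}x\rangle$ is estimated by (\ref{strong}) with $\varphi=\mathcal{L}(\cdot,y^*)+\tfrac{c}{2t^p}\|\cdot\|^2$ (which is $\tfrac{c}{t^p}$-strongly convex), producing a gap contribution, a favorable $-\tfrac{(\alpha-1)c}{2t^{q+p}}\beta\|x-x^*\|^2$, and an integrable source $\tfrac{(\alpha-1)c}{2t^{q+p}}\beta\|x^*\|^2$. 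Bounding $\dot\beta W\le(\tfrac{\alpha-1}{t^q}-\tfrac{M}{t})\beta W$ via (\ref{assp11}) cancels the $\tfrac{\alpha-1}{t^q}\beta W$ contributions, and I expect to reach an estimate of the form
\begin{equation*}
\dot{\mathcal{E}}(t)+\frac{M}{t}\beta(t)W(t)+\frac{\kappa}{t^q}\big(\|a(t)\|^2+\|b(t)\|^2\big)\le \frac{(\alpha-1)c}{2}\,t^{-q-p}\beta(t)\big(\|x^*\|^2+\|y^*\|^2\big)
\end{equation*}
for some $\kappa>0$ and all large $t$, whose right-hand side is integrable by (\ref{assump4}).

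From here the integration is routine: $\dot{\mathcal{E}}\le h$ with $h\in L^1$ forces $\mathcal{E}(t)-\int_{t_0}^t h$ to be nonincreasing and bounded below, so $\lim_{t\to\infty}\mathcal{E}(t)$ exists; integrating the two good terms gives $\int_{t_0}^{+\infty}\tfrac1t\beta(t)W(t)\,dt<+\infty$ and $\int_{t_0}^{+\infty}t^{-q}(\|a\|^2+\|b\|^2)\,dt<+\infty$, and boundedness of $\mathcal{E}$ already yields the $\mathcal{O}(1/\beta)$ bound. To sharpen it to $o(1/\beta)$ and obtain the velocity limits, I set $P(t)\coloneqq\beta(t)W(t)+\tfrac12\|a\|^2+\tfrac12\|b\|^2\ge0$; once $\mathcal{C}(t)\to0$ is checked one has $\lim_{t\to\infty}P(t)=\ell$ for some $\ell\ge0$. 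Since $0<q<1$ gives $t^{-q}\ge t^{-1}$ for large $t$, the two integrability statements combine into $\int_{t_0}^{+\infty}\tfrac1t P(t)\,dt<+\infty$; because $\int^{+\infty}\tfrac{dt}{t}=+\infty$, a positive limit $\ell$ would make this integral diverge, so $\ell=0$. Thus $P(t)\to0$, which forces $\beta(t)W(t)\to0$ (hence the gap is $o(1/\beta)$, both summands of $W$ being nonnegative) and $\|a(t)\|^2,\|b(t)\|^2\to0$, i.e. precisely (\ref{o}) and the two velocity statements.

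The main obstacle I anticipate lies in the derivative computation of the second step, and specifically in the choice of $\mathcal{C}(t)$. Unlike Theorem~\ref{Th1}, the slow regime supplies no lower bound on $\beta$ analogous to (\ref{assp2}), so the non-integrable quadratic and cross terms in $x-x^*$ generated by differentiating $\tfrac12\|a\|^2$ (of order $t^{-3q}\|x-x^*\|^2$ and $t^{-2q}\langle x-x^*,\dot x\rangle$) cannot be dominated by the favorable $t^{-q-p}\beta\|x-x^*\|^2$ and must instead be cancelled exactly, the cross terms through telescoping $\tfrac{\alpha-1}{2t^{2q}}\tfrac{d}{dt}\|x-x^*\|^2$ against $\dot{\mathcal{C}}(t)$. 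A secondary difficulty is that the trajectory need not be bounded in this regime, so the verification $\mathcal{C}(t)\to0$ and the existence of $\lim_{t\to\infty}P(t)$ require a genuine estimate rather than a crude bound; this is exactly where the interplay between $M$ and the exponent $q$ in (\ref{assp11}) has to be exploited.
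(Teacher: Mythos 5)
Your energy function and differential inequality are essentially the ones the paper uses: the correction $\mathcal{C}(t)$ you postulate is exactly $\tfrac{\alpha-1}{2}\bigl(\tfrac{q}{t^{q+1}}+\tfrac{1}{t^{2q}}\bigr)\bigl(\|x(t)-x^*\|^2+\|y(t)-y^*\|^2\bigr)$, chosen precisely so that its derivative's cross terms telescope against the $t^{-q-1}\langle x-x^*,\dot x\rangle$ and $t^{-2q}\langle x-x^*,\dot x\rangle$ terms produced by differentiating $\tfrac12\|a\|^2$, and the strong-convexity estimate and integrable source $\tfrac{(\alpha-1)c}{2}t^{-q-p}\beta(t)(\|x^*\|^2+\|y^*\|^2)$ are as you describe. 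However, your concluding step has a genuine gap that the paper's does not. You need $\lim_{t\to\infty}P(t)$ to exist, which requires $\mathcal{C}(t)\to 0$; but boundedness of $\mathcal{E}$ only yields $\|x(t)-x^*\|^2=\mathcal{O}(t^{2q})$, hence $\mathcal{C}(t)=\mathcal{O}(1)$, and you explicitly leave this unverified. In addition, the good term $\tfrac{\kappa}{t^q}(\|a\|^2+\|b\|^2)$ you claim is not obtainable when $q<1$: expanding $t^{-q}\|a\|^2$ produces a term of order $t^{-3q}\|x-x^*\|^2$, which dominates the only favorable quadratic available, of order $t^{-2q-1}\|x-x^*\|^2$, since $3q<2q+1$. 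Only the weight $1/t$ on $\|a\|^2+\|b\|^2$ can be absorbed, and then under the restriction $M\le q\bigl(1+\tfrac{1}{2\alpha-1}\bigr)$ (harmless, as (\ref{assp11}) is preserved when $M$ is decreased).

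The repair is to not split the good term at all: keep the full $\tfrac{M}{t}\hat{\mathcal{E}}(t)$ on the left, obtaining
\begin{equation*}
\frac{M}{t}\hat{\mathcal{E}}(t)+\dot{\hat{\mathcal{E}}}(t)\leq\frac{(\alpha-1)c}{2t^{q+p}}\beta(t)\left(\|x^*\|^2+\|y^*\|^2\right)
\end{equation*}
for large $t$, equivalently $\frac{d}{dt}\bigl(t^M\hat{\mathcal{E}}(t)\bigr)\leq \frac{(\alpha-1)c}{2}t^{M-q-p}\beta(t)\left(\|x^*\|^2+\|y^*\|^2\right)$. Integrating, dividing by $t^M$, and applying Lemma \ref{psi} with $\psi(t)=t^M$ and $\phi(t)=t^{-q-p}\beta(t)$ (integrable by (\ref{assump4})) gives $\hat{\mathcal{E}}(t)\to 0$ outright. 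Since every summand of $\hat{\mathcal{E}}$ is nonnegative, the $o(1/\beta(t))$ rate for the gap and both velocity limits follow simultaneously, and the convergence $\mathcal{C}(t)\to 0$ that blocks your argument comes for free. This is exactly the paper's route; your ``limit exists plus $\int t^{-1}P\,dt<\infty$ forces the limit to vanish'' device is unnecessary once the integrating factor $t^M$ is used.
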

\begin{proof}
For any fixed $ (x^*,y^*)\in \Omega $, we define a new energy function $ \hat{\mathcal{E}}:\left[t_0,+\infty \right) \to \mathbb{R} $ as
\begin{equation}\label{def11}
\hat{\mathcal{E}}(t)=\hat{\mathcal{E}}_1(t)+\hat{\mathcal{E}}_2(t)+\hat{\mathcal{E}}_3(t)
\end{equation}with
\begin{equation*}
\begin{cases}
\hat{\mathcal{E}}_1(t)=\beta(t)\left(\mathcal{L}(x(t),y^*)-\mathcal{L}(x^*,y(t))+\frac{c}{2t^p}(\|x(t)\|^2+\|y(t)\|^2)\right),\\
\hat{\mathcal{E}}_2(t)=\frac{1}{2}\|\frac{\alpha-1}{t^q}(x(t)-x^*)+ \dot{x}(t)\|^2+\frac{\alpha-1}{2}\left(\frac{q}{t^{q+1}}+\frac{1}{t^{2q}}\right)\|x(t)-x^*\|^2,\\
\hat{\mathcal{E}}_3(t)=\frac{1}{2}\|\frac{\alpha-1}{t^q}(y(t)-y^*)+ \dot{y}(t)\|^2+\frac{\alpha-1}{2}\left(\frac{q}{t^{q+1}}+\frac{1}{t^{2q}}\right)\|y(t)-y^*\|^2.
\end{cases}
\end{equation*}
Note that
\begin{equation*}
\frac{1}{2}\|\frac{\alpha-1}{t^q}(x(t)-x^*)+ \dot{x}(t)\|^2\leq \frac{(\alpha-1)^2}{t^{2q}}\|x(t)-x^*\|^2+\|\dot{x}(t)\|^2
\end{equation*}and
\begin{equation*}
\frac{1}{2}\|\frac{\alpha-1}{t^q}(y(t)-y^*)+ \dot{y}(t)\|^2\leq \frac{(\alpha-1)^2}{t^{2q}}\|y(t)-y^*\|^2+\|\dot{y}(t)\|^2.
\end{equation*}
Then, for $ M>0 $,
\begin{equation*}
\begin{array}{rl}
\frac{M}{t}\hat{\mathcal{E}}(t)\leq& \frac{M}{t}\beta(t)(\mathcal{L}(x(t),y^*)-\mathcal{L}(x^*,y(t)))+\frac{Mc}{2t^{1+p}} \beta(t)\left(\|x(t)\|^2+\|y(t)\|^2\right)\\
&+\frac{M(\alpha-1)}{2}\left(\frac{(2\alpha-1)}{t^{2q+1}}+\frac{q}{t^{q+2}}\right)\left(  \|x(t)-x^*\|^2+\|y(t)-y^*\|^2  \right)\\
&+\frac{M}{t}(\| \dot{x}(t)\|^2+\| \dot{y}(t)\|^2).
\end{array}
\end{equation*}
On the other hand, by a similar argument in Theorem \ref{Th1}, we have
\begin{equation*}
\begin{array}{rl}
\dot{\hat{\mathcal{E}}}(t)\leq&\left( \dot{\beta}(t)-\frac{\alpha-1}{t^q}\beta(t)\right)(\mathcal{L}(x(t),y^*)-\mathcal{L}(x^*,y(t)))\\
&+\frac{1}{2}\left( \frac{c}{t^p}\dot{\beta}(t)-\frac{cp}{t^{p+1}}\beta(t)   -\frac{(\alpha-1)c}{t^{q+p}}\beta(t) \right)\left(\|x(t)\|^2+\|y(t)\|^2\right) \\
&-\left(\frac{(\alpha-1)\alpha q}{t^{2q+1}}+\frac{(\alpha-1)q(q+1)}{2t^{q+2}}+\frac{(\alpha-1)c}{2t^{q+p}}\beta(t)\right)       ( \|x(t)-x^*\|^2+\|y(t)-y^*\|^2)\\
&-\frac{1}{t^q}(\| \dot{x}(t)\|^2+\| \dot{y}(t)\|^2)+\frac{(\alpha-1)c}{2t^{q+p}}\beta(t)\left(\|x^*\|^2+\|y^*\|^2\right).
\end{array}
\end{equation*}
Therefore, for any $ M>0 $, we get
\begin{equation}\label{ddt}
\begin{array}{rl}
\frac{M}{t}\hat{\mathcal{E}}(t)+\dot{\hat{\mathcal{E}}}(t)\leq &\left( \dot{\beta}(t)-\frac{\alpha-1}{t^q}\beta(t) +\frac{M}{t}\beta(t)\right)(\mathcal{L}(x(t),y^*)-\mathcal{L}(x^*,y(t)))\\
&+k(t)\left(\|x(t)\|^2+\|y(t)\|^2\right)+l(t)\left(  \|x(t)-x^*\|^2+\|y(t)-y^*\|^2  \right)\\
& +\left(\frac{M}{t}-\frac{1}{t^q}\right)(\| \dot{x}(t)\|^2+\| \dot{y}(t)\|^2)+\frac{(\alpha-1)c}{2t^{q+p}}\beta(t) \left(\|x^*\|^2+\|y^*\|^2\right),
\end{array}
\end{equation}
where
\begin{equation}\label{lt}
 k(t)\coloneqq\frac{1}{2}\left(\frac{c}{t^p}\dot{\beta}(t)-\frac{cp}{t^{p+1}}\beta(t) -\frac{(\alpha-1)c}{t^{q+p}}\beta(t)  +\frac{Mc}{t^{1+p}} \beta(t)\right)
\end{equation}
and
 \begin{equation}\label{kt}
 l(t)\coloneqq \frac{(\alpha-1)\left(M\left(\alpha-\frac{1}{2}\right)-\alpha q\right)}{t^{2q+1}}+\frac{q(\alpha-1)(M-q-1)}{2t^{q+2}} -\frac{(\alpha-1)c}{2t^{q+p}}\beta(t).
\end{equation}
Now, we evaluate the coefficients on the right of (\ref{ddt}).  By (\ref{assp11}),  we have
\begin{equation}\label{aaa}
\dot{\beta}(t)-\frac{\alpha-1}{t^q}\beta(t) +\frac{M}{t}\beta(t)\leq 0~~~\mbox{and} ~~~ k(t)\leq0,~~~\forall~t\geq t_0.
\end{equation}
Set $ 0<M \leq q\left(1+\frac{1}{2\alpha-1}\right) $. Then,
\begin{equation}\label{bbb}
l(t)\leq 0,~~~\forall~t\geq t_0.
\end{equation}
Moreover, since $0<q<1 $ and $ M>0 $, there exists $ t_3>0 $ such that
\begin{equation}\label{ccc}
\frac{M}{t}-\frac{1}{t^q}\leq0,~~~\forall ~ t\geq  t_3.
\end{equation}
Therefore,  noting that $ \mathcal{L}(x(t),y^*)-\mathcal{L}(x^*,y(t))\geq0 $ and combining (\ref{ddt}) with (\ref{aaa}), (\ref{bbb}) and (\ref{ccc}), we obtain
\begin{equation*}
\frac{M}{t}\hat{\mathcal{E}}(t)+\dot{\hat{\mathcal{E}}}(t)\leq\frac{(\alpha-1)c}{2t^{q+p}}\beta(t)\left(\|x^*\|^2+\|y^*\|^2\right),~~~\forall ~ t\geq T_3,
\end{equation*}where $ T_3=\max\{t_0,t_3 \}. $
Clearly,
\begin{equation*}
\frac{d}{dt}\left(t^M\hat{\mathcal{E}}(t)\right)=Mt^{M-1}\hat{\mathcal{E}}(t)+t^M\dot{\hat{\mathcal{E}}}(t)\leq\frac{(\alpha-1)c}{2}t^{M-q-p}\beta(t) \left(\|x^*\|^2+\|y^*\|^2\right).
\end{equation*}
By integrating it from $ T_3 $ to $ t $, we get
\begin{equation*}
\hat{\mathcal{E}}(t)\leq \frac{T_3^M\hat{\mathcal{E}}(T_3)}{t^M}+\frac{1}{t^M}\int_{T_3}^{t}\frac{(\alpha-1)c}{2}s^{M-q-p}\beta(s)\left(\|x^*\|^2+\|y^*\|^2\right)ds.
\end{equation*}
Moreover, it follows from Lemma \ref{psi} that
\begin{equation*}
\lim_{t\to +\infty}\frac{1}{t^M}\int_{T_3}^{t}\frac{(\alpha-1)c}{2}s^{M-q-p}\beta(s)\left(\|x^*\|^2+\|y^*\|^2\right)ds=0.
\end{equation*}
This, together with $ \lim_{t\to +\infty} t^M=+\infty $ and $ \hat{\mathcal{E}}(t)\geq0 $, implies that
 \begin{equation*}
\lim_{t\to +\infty} \hat{\mathcal{E}}(t)=0.
\end{equation*}Further,  it follows from (\ref{def11}) that
\begin{equation*}
 \mathcal{L}(x(t),y^*)-\mathcal{L}(x^*,y(t))=o\left( \frac{1}{\beta(t)}\right),~~~\mbox{as}~t\to+\infty,
\end{equation*}
\begin{equation*}
\lim_{t\to +\infty}\|\frac{\alpha-1}{t^q}(x(t)-x^*)+ \dot{x}(t)\|^2=0,~\mbox{and}~\lim_{t\to +\infty}\|\frac{\alpha-1}{t^q}(y(t)-y^*)+ \dot{y}(t)\|^2=0.
\end{equation*}
The proof is complete.\qed
\end{proof}

In the case that $ \beta(t)=t^r $ with $ r>0 $,  (\ref{assp11}) holds naturally, and (\ref{assump4}) becomes
\begin{equation}\label{assump-c2}
\int_{t_0}^{+\infty} t^{-q-p+r}dt<+\infty.
\end{equation}
Therefore, the following results can be easily established in terms of Theorem \ref{slow}.
\begin{corollary}\label{Colloary2}
Let $ \beta(t)=t^r $ with $ r>0 $  and $ (x(t),y(t))_{t\geq t_0} $ be a global solution of the dynamical system $(\ref{dyn})$. Suppose that $(\ref{assump-c2})$ holds.
Then, for any  $ (x^*,y^*)\in\Omega $, the trajectory $ (x(t),y(t))_{t\geq t_0} $ is bounded and that
\begin{equation*}\label{Co2}
\mathcal{L}(x(t),y^*)-\mathcal{L}(x^*,y(t))=o\left(\frac{1}{t^{r} }\right),~~~as ~  t\to +\infty,
\end{equation*}
\begin{equation*}
\lim_{t\to +\infty}\|\frac{\alpha-1}{t^q}(x(t)-x^*)+ \dot{x}(t)\|^2=0,~and~\lim_{t\to +\infty}\|\frac{\alpha-1}{t^q}(y(t)-y^*)+ \dot{y}(t)\|^2=0.
\end{equation*}
\end{corollary}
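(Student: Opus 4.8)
The plan is to read Corollary \ref{Colloary2} as nothing more than the specialization $\beta(t)=t^r$ of Theorem \ref{slow}, so the whole task reduces to checking that the two structural hypotheses (\ref{assp11}) and (\ref{assump4}) of that theorem are met by the power scaling and then quoting its conclusions verbatim.

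First I would verify (\ref{assp11}). With $\beta(t)=t^r$ one has $\dot\beta(t)/\beta(t)=r/t$, so the required inequality $\frac{r}{t}\le\frac{\alpha-1}{t^q}-\frac{M}{t}$ is equivalent to $(r+M)t^{q-1}\le\alpha-1$. Because $0<q<1$ the exponent $q-1$ is negative, so the left-hand side is decreasing and tends to $0$; hence for every fixed $M>0$ the inequality holds on $[t_0,+\infty)$ once $t_0$ is taken large enough, which is harmless since the statement is asymptotic. In particular I would fix $M=q\big(1+\frac{1}{2\alpha-1}\big)$, the exact value that makes the coefficient $l(t)$ of (\ref{kt}) nonpositive inside the proof of Theorem \ref{slow}, so that both the hypothesis (\ref{assp11}) and the internal sign requirement (\ref{bbb}) are satisfied simultaneously. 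Then I would note that (\ref{assump4}) reads $\int_{t_0}^{+\infty}t^{-q-p}\cdot t^r\,dt<+\infty$, which is exactly the assumed (\ref{assump-c2}).

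With both hypotheses in force, Theorem \ref{slow} applies directly and yields $\mathcal{L}(x(t),y^*)-\mathcal{L}(x^*,y(t))=o(1/\beta(t))=o(1/t^r)$ together with the two limits $\lim_{t\to+\infty}\|\frac{\alpha-1}{t^q}(x(t)-x^*)+\dot x(t)\|^2=0$ and its counterpart for $y$. This delivers every assertion of the corollary except the boundedness of the trajectory.

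Boundedness is the one claim not literally contained in Theorem \ref{slow}, and I expect it to be the main obstacle. The energy analysis behind Theorem \ref{slow} only certifies $\hat{\mathcal E}(t)\to0$, and the position parts of $\hat{\mathcal E}_2,\hat{\mathcal E}_3$ control merely $t^{-2q}\|x(t)-x^*\|^2$ and $t^{-2q}\|y(t)-y^*\|^2$, so by itself this yields only the growth estimate $\|x(t)-x^*\|=o(t^q)$. To upgrade it to genuine boundedness I would set $u(t)=x(t)-x^*$, abbreviate $w(t):=\frac{\alpha-1}{t^q}u(t)+\dot u(t)$ (which tends to $0$ by the limit just obtained), and run a comparison argument on the scalar quantity $\|u(t)\|$, which satisfies $\frac{d}{dt}\|u(t)\|+\frac{\alpha-1}{t^q}\|u(t)\|\le\|w(t)\|$; integrating with the factor $\exp\!\big(\frac{\alpha-1}{1-q}t^{1-q}\big)$ exploits the divergence of $\int^{t}s^{-q}\,ds$ coming from $q<1$. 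The delicate point, and the crux of the corollary, is that the weak damping $\alpha/t^q$ only tames the growth to order $t^q$, so closing the estimate to a uniform bound requires an extra ingredient, for instance integrability of the velocity along the trajectory in the spirit of the last conclusion of Theorem \ref{Th1}, or a quantified decay rate for $w(t)$ obtained by keeping (rather than discarding) the velocity term in (\ref{ddt}); supplying that ingredient is where the real work lies.
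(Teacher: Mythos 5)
Your proposal follows exactly the paper's route: the paper proves this corollary simply by observing that for $\beta(t)=t^r$ one has $\dot\beta(t)/\beta(t)=r/t$ so that (\ref{assp11}) is satisfied, that (\ref{assump4}) reduces to (\ref{assump-c2}), and then invoking Theorem \ref{slow}; your verification of the hypotheses is in fact slightly more careful than the paper's (which just says (\ref{assp11}) ``holds naturally,'' ignoring that $(r+M)t_0^{q-1}\le\alpha-1$ is needed at the left endpoint), and your choice $M=q\bigl(1+\tfrac{1}{2\alpha-1}\bigr)$ matches the one used inside the proof of Theorem \ref{slow}.

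On the boundedness claim: you have correctly identified a real discrepancy rather than a gap in your own argument. Theorem \ref{slow} does not assert boundedness of the trajectory, and the paper offers no additional reasoning for Corollary \ref{Colloary2} beyond the reference to that theorem; the energy $\hat{\mathcal{E}}(t)\to0$ only controls $t^{-2q}\|x(t)-x^*\|^2$, yielding $\|x(t)-x^*\|=o(t^q)$, and your integrating-factor comparison with $\exp\bigl(\tfrac{\alpha-1}{1-q}t^{1-q}\bigr)$ likewise cannot do better than $o(t^q)$ given only $\|w(t)\|\to0$ (since $\int_T^tE(s)\,ds\sim\tfrac{t^q}{\alpha-1}E(t)$). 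The boundedness assertion appears to have been carried over from Corollary \ref{Colloary1}, where it genuinely follows from the bounded energy of Theorem \ref{Th1}; it is not supported by the slow-vanishing analysis, and you were right not to pretend otherwise. Everything else in the corollary is fully justified by your argument.
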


\section{Strong convergence of the   trajectory to the minimal norm solution}
In this section, we establish the strong convergence of the trajectory $ (x(t),y(t)) $ generated by the dynamical system $(\ref{dyn})$ to the minimal norm solution of problem (\ref{PD}).

Before conducting the analysis, we need some preparatory results. For $z^*:=(x^*,y^*)\in \Omega $, we consider the following convex optimization problem:
\begin{equation}\label{S}
\min_{z\in \mathbb{R}^n\times \mathbb{R}^m} \Phi_{z^* }(z),
\end{equation}
where $ z\coloneqq (x,y) $ and $\Phi:\mathbb{R}^n\times \mathbb{R}^m\to \mathbb{R}$ is defined as
\begin{equation*}
\Phi_{z^* }(z) :=   \mathcal{L}(x ,y^*)-\mathcal{L}(x^*,y ).
\end{equation*}
By (\ref{Saddlepoint}), we know that the optimal value of (\ref{S}) is $ 0 $.  Moreover, the optimal condition of problem (\ref{S}) is also (\ref{op}). This means that the solution set of (\ref{S}) is the saddle point set $ \Omega $ of problem (\ref{PD}).

For each $ \epsilon>0 $, associated with    problem (\ref{S}), its  strongly convex minimization problem is
\begin{equation}\label{ST}
\min_{z\in \mathbb{R}^n\times \mathbb{R}^m}  \Phi^{\epsilon}_{z^*}(z),
\end{equation}where $$\Phi^{\epsilon}_{z^*}(z)=\Phi_{z^*}(z)+\frac{\epsilon}{2}\|z\|^2 .$$  Let $ z_{\epsilon} $ denote the  unique solution of problem (\ref{ST}). We know (see \cite{Attouch1996,Bot2021T}) that the Tikhonov approximation curve $ \epsilon \to z_{\epsilon} $ satisfies
\begin{equation*}
\nabla  \Phi^{\epsilon}_{z^*}(z_{\epsilon})=\nabla\Phi_{z^*}(z_{\epsilon})+\epsilon z_{\epsilon}=0,
\end{equation*} and
\begin{equation}\label{ep}
\lim\limits_{\epsilon\to 0} \|z_{\epsilon}-\bar{z}^*\|=0,~\|z_{\epsilon}\|\leq \|\bar{z}^*\|,~\forall \epsilon>0.
\end{equation}
Here $ \bar{z}^* $ is the minimal norm solution of problem (\ref{S}), i.e., $  \bar{z}^*=\textup{Proj}_{\Omega}{0} $.

The following auxiliary result will play an important role in the sequel.
\begin{lemma}\label{Phi}
Suppose that $ \epsilon:\left[t_0,+\infty\right) \to \left[0,+\infty\right) $ with $\lim_{t\to +\infty} \epsilon(t)=0 $. Let $  \bar{z}^*\coloneqq(\bar{x}^*,\bar{y}^*)=\textup{Proj}_{\Omega}{0} $ and $ z(t)\coloneqq(x(t),y(t))_{t\geq t_0} $ be a global solution of the dynamical system \textup{(\ref{dyn})}. Then,
\begin{equation*}
\frac{\epsilon(t)}{2}(\|z(t)-z_{\epsilon(t)}\|^2+\|z_{\epsilon(t)}\|^2-\|\bar{z}^*\|^2)\leq\Phi_{\bar{z}^*}^{\epsilon(t)}(z(t))-\Phi_{\bar{z}^*}^{\epsilon(t)}(\bar{z}^*).
\end{equation*}
\end{lemma}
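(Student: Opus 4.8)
The plan is to exploit the $\epsilon(t)$-strong convexity of the regularized objective $\Phi^{\epsilon(t)}_{\bar{z}^*}$ together with the first-order optimality of its minimizer $z_{\epsilon(t)}$ (the unique solution of problem (\ref{ST}) taken with reference point $z^*=\bar{z}^*$), and then to compare the values of $\Phi^{\epsilon(t)}_{\bar{z}^*}$ at the three points $z(t)$, $z_{\epsilon(t)}$ and $\bar{z}^*$.

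First I would record that $\Phi_{\bar{z}^*}(z)=\mathcal{L}(x,\bar{y}^*)-\mathcal{L}(\bar{x}^*,y)$ is convex in $z=(x,y)$: its first component $\mathcal{L}(\cdot,\bar{y}^*)=f(\cdot)+\langle K\cdot,\bar{y}^*\rangle-g(\bar{y}^*)$ is convex because $f$ is convex, and its second component $-\mathcal{L}(\bar{x}^*,\cdot)=-f(\bar{x}^*)-\langle K\bar{x}^*,\cdot\rangle+g(\cdot)$ is convex because $g$ is convex. Hence $\Phi^{\epsilon(t)}_{\bar{z}^*}=\Phi_{\bar{z}^*}+\frac{\epsilon(t)}{2}\|\cdot\|^2$ is $\epsilon(t)$-strongly convex. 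Applying the strong-convexity inequality (\ref{strong}) to $\varphi=\Phi^{\epsilon(t)}_{\bar{z}^*}$ at $x_1=z_{\epsilon(t)}$, $x_2=z(t)$ and using the optimality relation $\nabla\Phi^{\epsilon(t)}_{\bar{z}^*}(z_{\epsilon(t)})=0$ to annihilate the gradient term yields
\begin{equation*}
\Phi^{\epsilon(t)}_{\bar{z}^*}(z(t))\;\ge\;\Phi^{\epsilon(t)}_{\bar{z}^*}(z_{\epsilon(t)})+\frac{\epsilon(t)}{2}\|z(t)-z_{\epsilon(t)}\|^2.
\end{equation*}

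The second ingredient is the two baseline evaluations. Since $\bar{z}^*=\mathrm{Proj}_{\Omega}0\in\Omega$ is a saddle point, the inequality (\ref{Saddlepoint}) gives $\Phi_{\bar{z}^*}(z)=\mathcal{L}(x,\bar{y}^*)-\mathcal{L}(\bar{x}^*,y)\ge 0$ for every $z$, with equality at $z=\bar{z}^*$; thus $\Phi_{\bar{z}^*}(\bar{z}^*)=0$ while $\Phi_{\bar{z}^*}(z_{\epsilon(t)})\ge 0$. Consequently $\Phi^{\epsilon(t)}_{\bar{z}^*}(\bar{z}^*)=\frac{\epsilon(t)}{2}\|\bar{z}^*\|^2$ and $\Phi^{\epsilon(t)}_{\bar{z}^*}(z_{\epsilon(t)})\ge\frac{\epsilon(t)}{2}\|z_{\epsilon(t)}\|^2$. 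Substituting this lower bound into the displayed strong-convexity estimate and subtracting $\frac{\epsilon(t)}{2}\|\bar{z}^*\|^2=\Phi^{\epsilon(t)}_{\bar{z}^*}(\bar{z}^*)$ from both sides gives exactly
\begin{equation*}
\Phi^{\epsilon(t)}_{\bar{z}^*}(z(t))-\Phi^{\epsilon(t)}_{\bar{z}^*}(\bar{z}^*)\;\ge\;\frac{\epsilon(t)}{2}\left(\|z(t)-z_{\epsilon(t)}\|^2+\|z_{\epsilon(t)}\|^2-\|\bar{z}^*\|^2\right),
\end{equation*}
which is the claimed inequality. I do not expect a genuine obstacle here, as the argument is a template Tikhonov/strong-convexity estimate; the only points requiring care are bookkeeping, namely keeping the reference point fixed at the minimal-norm solution $\bar{z}^*$ throughout (so that $z_{\epsilon(t)}$ really is the minimizer of $\Phi^{\epsilon(t)}_{\bar{z}^*}$), and correctly using that the minimal value of the unregularized $\Phi_{\bar{z}^*}$ equals $0$ so that the term $\Phi_{\bar{z}^*}(z_{\epsilon(t)})\ge 0$ is dropped in the favorable direction.
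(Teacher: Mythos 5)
Your argument is correct: convexity of $\Phi_{\bar{z}^*}$ (hence $\epsilon(t)$-strong convexity of $\Phi^{\epsilon(t)}_{\bar{z}^*}$), the optimality condition $\nabla\Phi^{\epsilon(t)}_{\bar{z}^*}(z_{\epsilon(t)})=0$, and the facts $\Phi_{\bar{z}^*}\ge 0$ with $\Phi_{\bar{z}^*}(\bar{z}^*)=0$ combine exactly as you describe to give the stated inequality. The paper omits the proof and simply cites Lemma~4.1 of Zhu et al., whose argument is precisely this standard Tikhonov/strong-convexity estimate, so your proposal fills in the intended reasoning.
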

\begin{proof}
The proof is similar to \cite[Lemma 4.1]{Zhu2024T}, so we omit here.\qed
\end{proof}

Now, we establish the following strong convergence of the trajectory $ (x(t),y(t)) $  generated by the dynamical system (\ref{dyn}).
\begin{theorem}\label{strongconvergence}Suppose that
 \begin{equation}\label{str6}
\exists M>0~  \textup{such that}~~ \frac{\dot{\beta}(t)}{\beta(t)}\leq \frac{\alpha-1}{t^q}-\frac{M}{t}~\mbox{and}~\lim_{t\to +\infty} t^{M-p}\beta(t)=+\infty,
\end{equation} and
\begin{equation}\label{str7}
\int_{t_0}^{+\infty}t^{-q-p}\beta(t)dt<+\infty.
\end{equation}
Let $ (x(t),y(t))_{t\geq t_0} $ be a global solution of the dynamical system \textup{(\ref{dyn})}. Then, for $  (\bar{x}^*,\bar{y}^*)=\textup{Proj}_{\Omega}{0} $,
\begin{equation*}
\liminf_{t\to +\infty}\|(x(t),y(t))-(\bar{x}^*,\bar{y}^*)\| =0.
\end{equation*}
Further, if there exists a large enough $ T $ such that the trajectory $ (x(t),y(t))_{t\geq T} $ stays in either the open ball $ \mathbb{B}(0,\|(\bar{x}^*,\bar{y}^*) \|) $ or its complements, then,
\begin{equation*}
\lim_{t\to +\infty}\|(x(t),y(t))-(\bar{x}^*,\bar{y}^* )\|=0.
\end{equation*}
\end{theorem}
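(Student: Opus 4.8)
The plan is to transfer the convergence question to the Tikhonov approximation curve $\epsilon\mapsto z_{\epsilon}$ and then to exploit the minimal-norm characterization of $\bar{z}^{*}=(\bar{x}^{*},\bar{y}^{*})=\mathrm{Proj}_{\Omega}0$ through the ball dichotomy. First I would note that (\ref{str6})--(\ref{str7}) contain precisely the hypotheses (\ref{assp11})--(\ref{assump4}) of Theorem \ref{slow} with the same constant $M$, so applying that theorem with the fixed saddle point $(x^{*},y^{*})=(\bar{x}^{*},\bar{y}^{*})$ gives $\lim_{t\to+\infty}\hat{\mathcal{E}}(t)=0$, the gap estimate $\Phi_{\bar{z}^{*}}(z(t))=\mathcal{L}(x(t),\bar{y}^{*})-\mathcal{L}(\bar{x}^{*},y(t))=o(1/\beta(t))$, and the vanishing of the anchored velocities. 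With the choice $\epsilon(t)=c/t^{p}\to0$, property (\ref{ep}) yields $z_{\epsilon(t)}\to\bar{z}^{*}$, so $\liminf_{t}\|z(t)-\bar{z}^{*}\|=0$ is equivalent to $\liminf_{t}\|z(t)-z_{\epsilon(t)}\|=0$. Lemma \ref{Phi} then provides the two working inequalities $\tfrac{\epsilon(t)}{2}\|z(t)-z_{\epsilon(t)}\|^{2}\le\Phi^{\epsilon(t)}_{\bar{z}^{*}}(z(t))=\hat{\mathcal{E}}_{1}(t)/\beta(t)$ and, after expanding the regularized gap, the sharper form $\|z(t)-z_{\epsilon(t)}\|^{2}\le\tfrac{2t^{p}}{c}\Phi_{\bar{z}^{*}}(z(t))+\|z(t)\|^{2}-\|z_{\epsilon(t)}\|^{2}$.

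For the $\liminf$ assertion I would argue by contradiction. If $\liminf_{t}\|z(t)-z_{\epsilon(t)}\|>0$, there are $\eta>0$ and $T$ with $\|z(t)-z_{\epsilon(t)}\|\ge\eta$ for $t\ge T$, hence $\hat{\mathcal{E}}_{1}(t)\ge\tfrac{c\eta^{2}}{2}t^{-p}\beta(t)$ and $t^{M}\hat{\mathcal{E}}(t)\ge\tfrac{c\eta^{2}}{2}t^{M-p}\beta(t)$, whose right-hand side diverges by the growth condition $\lim_{t}t^{M-p}\beta(t)=+\infty$ in (\ref{str6}). Against this I would place the upper estimate obtained exactly as in Theorem \ref{slow}, namely $\tfrac{d}{dt}\bigl(t^{M}\hat{\mathcal{E}}(t)\bigr)\le\tfrac{(\alpha-1)c}{2}t^{M-q-p}\beta(t)(\|\bar{x}^{*}\|^{2}+\|\bar{y}^{*}\|^{2})$, which after integration and an application of Lemma \ref{psi} with $\psi(t)=t^{M}$ (legitimate since $t^{-q-p}\beta\in L^{1}$ by (\ref{str7})) controls the remainder $t^{-M}\int_{T}^{t}s^{M-q-p}\beta(s)\,ds$. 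Reconciling the forced divergence of $t^{M}\hat{\mathcal{E}}(t)$ with this remainder is meant to be impossible, delivering $\liminf_{t}\|z(t)-\bar{z}^{*}\|=0$ and, along the way, a sequence $t_{k}\to+\infty$ with $z(t_{k})\to\bar{z}^{*}$.

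To upgrade the $\liminf$ to a genuine limit I would split according to the ball dichotomy. If $z(t)$ eventually lies in $\mathbb{B}(0,\|\bar{z}^{*}\|)$, then $\|z(t)\|\le\|\bar{z}^{*}\|$ forces $\|z(t)-\bar{z}^{*}\|^{2}\le2\langle\bar{z}^{*}-z(t),\bar{z}^{*}\rangle$, and I would show the right-hand side tends to $0$ by using the subsequential limit $z(t_{k})\to\bar{z}^{*}$ together with the already established decay to promote the inner product $\langle z(t),\bar{z}^{*}\rangle$ to its limit $\|\bar{z}^{*}\|^{2}$. If instead $z(t)$ stays in the complement, then $\|z(t)\|\ge\|\bar{z}^{*}\|\ge\|z_{\epsilon(t)}\|$, and writing $\|z(t)-\bar{z}^{*}\|^{2}=\|z(t)-z_{\epsilon(t)}\|^{2}+2\langle z(t)-z_{\epsilon(t)},z_{\epsilon(t)}-\bar{z}^{*}\rangle+\|z_{\epsilon(t)}-\bar{z}^{*}\|^{2}$ I would feed the sign information $\|z(t)\|^{2}-\|z_{\epsilon(t)}\|^{2}\ge0$ into the sharpened inequality of the first paragraph to control $\|z(t)-z_{\epsilon(t)}\|$ along the whole trajectory; in either case a one-sided bound on $\|z(t)\|$ is exactly what turns the subsequential limit into a full one.

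The hard part is the second step. Because the damping is slow, $0<q<1$, the Tikhonov source term $t^{-q-p}\beta(t)$ dominates the restoring contribution $t^{-1-p}\beta(t)$ coming from the factor $\tfrac{M}{t}\hat{\mathcal{E}}(t)$, so the direct bound $\|z(t)-z_{\epsilon(t)}\|^{2}\le\tfrac{2t^{p}}{c\beta(t)}\hat{\mathcal{E}}(t)$ degenerates and $t^{-M}\int_{T}^{t}s^{M-q-p}\beta(s)\,ds$ need not be $o(t^{-p}\beta(t))$. The genuine difficulty is therefore to make the growth condition $\lim_{t}t^{M-p}\beta(t)=+\infty$—introduced precisely to offset the slow damping—interact with the integrated energy inequality and Lemma \ref{psi} sharply enough to exclude $\liminf_{t}\|z(t)-z_{\epsilon(t)}\|>0$; the book-keeping of the ball dichotomy in the third step is comparatively routine once this is in hand.
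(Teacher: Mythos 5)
Your overall strategy (reduce to the Tikhonov curve via Lemma \ref{Phi}, then split by the ball dichotomy) is the right one, and your treatment of the case where the trajectory stays inside $\mathbb{B}(0,\|\bar z^*\|)$ is essentially the paper's argument. But there is a genuine gap in the two steps that carry the real content. First, your contradiction argument for the unconditional $\liminf$ does not close: assuming $\|z(t)-z_{\epsilon(t)}\|\ge\eta$ you obtain the lower bound $t^M\hat{\mathcal E}(t)\ge \tfrac{c\eta^2}{2}t^{M-p}\beta(t)$, but the integrated energy inequality only gives $t^M\hat{\mathcal E}(t)\le T^M\hat{\mathcal E}(T)+\tfrac{(\alpha-1)c}{2}\|\bar z^*\|^2\int_T^t s^{M-q-p}\beta(s)\,ds$, and this integral is generically of order $t^{1-q}\cdot t^{M-p}\beta(t)$ (for $\beta(t)=t^r$ it is $\asymp t^{M-q-p+r+1}$ versus your lower bound $t^{M-p+r}$, a discrepancy of $t^{1-q}\to+\infty$ since $q<1$). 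The two bounds are therefore compatible and no contradiction arises; Lemma \ref{psi} only tells you the integral is $o(t^M)$, which is far too weak. The paper obtains the unconditional $\liminf$ by a different route: if the trajectory crosses the sphere $\|z\|=\|\bar z^*\|$ at times $t_n\to\infty$, every weak cluster point of $z(t_n)$ lies in $\Omega$ (by the gap decay from Theorem \ref{slow} and lower semicontinuity) and has norm at most $\|\bar z^*\|$, hence equals $\bar z^*$ by minimal-norm uniqueness, and the norm equality $\|z(t_n)\|=\|\bar z^*\|$ upgrades this to $z(t_n)\to\bar z^*$; if no such crossings occur, one of the two dichotomy cases applies. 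You have no substitute for this.

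Second, your handling of the exterior case also fails as written. The ``sharpened'' inequality $\|z(t)-z_{\epsilon(t)}\|^2\le\tfrac{2t^p}{c}\Phi_{\bar z^*}(z(t))+\|z(t)\|^2-\|z_{\epsilon(t)}\|^2$ is useless there: the gap estimate only gives $\tfrac{2t^p}{c}\Phi_{\bar z^*}(z(t))=o\bigl(t^p/\beta(t)\bigr)$ with $t^p/\beta(t)\to+\infty$ (since (\ref{str7}) forces $\beta(t)=o(t^{q+p-1})=o(t^p)$), and the sign information $\|z(t)\|^2-\|z_{\epsilon(t)}\|^2\ge0$ points the wrong way for an upper bound. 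The missing idea is to exploit $\|z(t)\|\ge\|\bar z^*\|$ \emph{before} integrating: the paper introduces the shifted energy $\tilde{\mathcal E}(t)=\hat{\mathcal E}(t)-\tfrac{c\beta(t)}{2t^p}\|\bar z^*\|^2$, for which the Tikhonov source term $\tfrac{(\alpha-1)c}{2t^{q+p}}\beta(t)\|\bar z^*\|^2$ is exactly absorbed by the nonpositive term $k(t)\bigl(\|z(t)\|^2-\|\bar z^*\|^2\bigr)$, so that $\tfrac{d}{dt}\bigl(t^M\tilde{\mathcal E}(t)\bigr)\le0$ and hence $\beta(t)\bigl(\Phi^{\epsilon(t)}_{\bar z^*}(z(t))-\Phi^{\epsilon(t)}_{\bar z^*}(\bar z^*)\bigr)\le Ct^{-M}$ with no residual integral. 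Only then does Lemma \ref{Phi} give $\|z(t)-z_{\epsilon(t)}\|^2\le\tfrac{2C}{c}\,t^{p-M}\beta(t)^{-1}+\|\bar z^*\|^2-\|z_{\epsilon(t)}\|^2\to0$, which is precisely where the growth hypothesis $\lim_{t\to+\infty}t^{M-p}\beta(t)=+\infty$ in (\ref{str6}) enters. You correctly sensed in your final paragraph that this is the hard step; the shifted-Lyapunov cancellation is the device you are missing.
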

\begin{proof}
Depending upon the sign of the term $\|(x(t),y(t))\|-\|(\bar{x}^*, \bar{y}^*)\|$, we analyze separately the following three cases.

\textbf{Case I}: There exists a large enough $ T $ such that  the trajectory $ (x(t),y(t))_{t\geq T} $ stays in the complement of $ \mathbb{B}(0,\|(\bar{x}^*,\bar{y}^*) \|) $.
In this case,
\begin{equation*}
\|(x(t),y(t))\|\geq \|(\bar{x}^*,\bar{y}^*)\|, ~\forall t\geq T.
\end{equation*}
Equivalently,
\begin{equation}\label{CaseI}
\|x(t)\|^2+\|y(t)\|^2\geq \|\bar{x}^*\|^2+\|\bar{y}^*\|^2, ~\forall t\geq T.
\end{equation}
For a fixed point $ (\bar{x}^*,\bar{y}^*)\in \Omega $, we define the energy function $ \tilde{\mathcal{E}}:\left[t_0,+\infty\right)\to \mathbb{R} $ as
\begin{equation}\label{sly}
\begin{array}{rl}
\tilde{\mathcal{E}}(t)\coloneqq&\hat{\mathcal{E}}(t)-\frac{c\beta(t)}{2t^p}(\|\bar{x}^*\|^2+\|\bar{y}^*\|^2).
\end{array}
\end{equation}
Using a similar argument as in the proof of Theorem \ref{slow}, we have
\begin{equation*}
\begin{array}{rl}
\frac{M}{t}\tilde{\mathcal{E}}(t)+\dot{\tilde{\mathcal{E}}}(t)\leq &\left( \dot{\beta}(t)-\frac{\alpha-1}{t^q}\beta(t) +\frac{M}{t}\beta(t)\right)(\mathcal{L}(x(t),\bar{y}^*)-\mathcal{L}(\bar{x}^*,y(t)))\\
&+k(t)\left(\|x(t)\|^2+\|y(t)\|^2-\|\bar{x}^*\|^2-\|\bar{y}^*\|^2\right) \\
&+l(t)\left(  \|x(t)-\bar{x}^*\|^2+\|y(t)-\bar{y}^*\|^2  \right)+\left(\frac{M}{t}-\frac{1}{t^q}\right)(\| \dot{x}(t)\|^2+\| \dot{y}(t)\|^2)
\end{array}
\end{equation*}with $ k(t)  $  and $ l(t) $ as defined in (\ref{lt}) and  (\ref{kt}), respectively. By (\ref{aaa}), (\ref{bbb}), (\ref{ccc}),   (\ref{CaseI}), and $\mathcal{L}(x(t),\bar{y}^*)-\mathcal{L}(\bar{x}^*,y(t)))\geq0  $, we have
\begin{equation*}\label{p2}
\frac{M}{t}\tilde{\mathcal{E}}(t)+\dot{\tilde{\mathcal{E}}}(t)\leq0,~~~\forall t\geq T.
\end{equation*}
Thus,
\begin{equation*}
\frac{d}{dt}\left(t^M\tilde{\mathcal{E}}(t)\right)=Mt^{M-1}\tilde{\mathcal{E}}(t)+t^M\dot{\tilde{\mathcal{E}}}(t)\leq0,~~~\forall t\geq T.
\end{equation*}
Integrating it from $ T $ to $t $, we have
\begin{equation*}\label{s4-1}
\tilde{\mathcal{E}}(t)\leq \frac{T^M\tilde{\mathcal{E}}(T)}{t^M},~~~\forall t\geq T.
\end{equation*}
This, together with (\ref{sly}), yields that
\begin{equation*}\label{phi1}
\resizebox{0.96\hsize}{!}{$
\beta(t)\left(\mathcal{L}(x(t),\bar{y}^*)-\mathcal{L}(\bar{x}^*,y(t))+\frac{c}{2t^p}(\|x(t)\|^2+\|y(t)\|^2-\|\bar{x}^*\|^2-\|\bar{y}^*\|^2)\right)\leq \tilde{\mathcal{E}}(t) \leq \frac{T^M\tilde{\mathcal{E}}(T)}{t^M}.
$}
\end{equation*}
Set $ \epsilon(t)=\frac{c}{t^p} $. Then, it becomes
\begin{equation*}
\beta(t)\left(\Phi_{\bar{z}^*}^{\epsilon(t)}(z(t))-\Phi_{\bar{z}^*}^{\epsilon(t)}(\bar{z}^*)\right)\leq \frac{T^M\tilde{\mathcal{E}}(T)}{t^M},
\end{equation*}where $ z(t)\coloneqq(x(t),y(t)) $ and $ \bar{z}^*\coloneqq(\bar{x}^*,\bar{y}^*) $.
 Thus, it follows from Lemma \ref{Phi} that
\begin{equation*}
\|z(t)-z_{\epsilon(t)}\|^2+\|z_{\epsilon(t)}\|^2-\|\bar{z}^*\|^2\leq \frac{2T^M\tilde{\mathcal{E}}(T)}{t^M\beta(t)\epsilon(t)}.
\end{equation*}Combining (\ref{ep}),  (\ref{str6}) and $  \epsilon(t)=\frac{c}{t^p} $, we obtain
\begin{equation*}
\lim_{t\to +\infty}\|z(t)-\bar{z}^*\|=0,
\end{equation*}i.e.,
\begin{equation*}
\lim_{t\to +\infty}\|(x(t),y(t))-(\bar{x}^*,\bar{y}^* )\|=0.
\end{equation*}

\textbf{Case II}: There exists a large enough $ T $ such that  the trajectory $ (x(t),y(t))_{t\geq T} $ stays in  $ \mathbb{B}(0,\|(\bar{x}^*,\bar{y}^*) \|) $. In this case,
\begin{equation*}
\|(x(t),y(t))\|< \|(\bar{x}^*,\bar{y}^*)\|, ~\forall t\geq T,
\end{equation*}
which means that
\begin{equation}\label{s4-2}
\|z(t)\|< \|\bar{z}^*\|, ~\forall t\geq T.
\end{equation}
 Let $ \bar{z} $ be a weak  sequential cluster point of $ z(t) _{t\geq t_0} $. Thus, there exists  $ \{t_n\}_{n\in \mathbb{N}} $ satisfying $ t_n\to +\infty $ such that
\begin{equation*}
z(t_n)\rightharpoonup \bar{z},~~~\mbox{as}~n\to +\infty.
\end{equation*}Since $ \Phi $ is weakly lower semicontinuous, we have
 \begin{equation}\label{PP}
\Phi_{\bar{z}^*}(\bar{z})\leq \liminf_{n\to +\infty}\Phi_{\bar{z}^*}(z(t_n)).
\end{equation}
By   (\ref{o}),  we have
\begin{equation*}
\lim_{t\to +\infty} \Phi_{\bar{z}^*}(z(t))=\lim_{t\to +\infty}\left( \mathcal{L}(x(t),\bar{y}^*)-\mathcal{L}(\bar{x}^*,y(t))\right)=0.
\end{equation*}
Combining this with (\ref{PP}), we obtain
\begin{equation*}
\Phi_{\bar{z}^*}(\bar{z})=0.
\end{equation*}Then,
 $ \bar{z}\in\Omega $. Since the norm is weakly lower semicontinuous, we have
\begin{equation}\label{s4-3}
\|\bar{z}\|\leq\liminf_{n\to +\infty}\|z(t_n)\|\leq\|\bar{z}^*\|,
\end{equation}
where the second inequality holds by (\ref{s4-2}).
Note that $ \bar{z}^* $ is the unique element of minimal morn in $ \Omega $. Thus, it follows from  (\ref{s4-3}) that $ \bar{z}=\bar{z}^* $. This shows that the trajectory $ z(t) $ converges weakly to $ \bar{z}^* $.
Therefore, we can conclude that
\begin{equation*}
\|\bar{z}^*\|\leq \liminf_{t\to +\infty}\|z(t)\|\leq \limsup_{t\to +\infty}\|z(t)\|\leq\|\bar{z}^*\|.
\end{equation*}
Consequently, \begin{equation*}
\lim_{t\to +\infty}\|z(t)\|=\|\bar{z}^*\|.
\end{equation*}Taking into account that $z(t)\rightharpoonup \bar{z}^*$ as $t\to +\infty$,  the convergence is strong, that is
\begin{equation*}
\lim_{t\to +\infty} \|z(t)-\bar{z}^*\|=0,
\end{equation*} Therefore,
\begin{equation*}
\lim_{t\to +\infty}\|(x(t),y(t))-(\bar{x}^*,\bar{y}^* )\|=0.
\end{equation*}

\textbf{Case III}: For any $ T\geq t_0 $, there exists $ t\geq T  $ such that $ \|(x(t),y(t))\|\geq \|(\bar{x}^*,\bar{y}^*)\| $ and there exists $ s\geq T $ such that $ \|(x(s),y(s))\|< \|(\bar{x}^*,\bar{y}^*)\| $.  By the continuity of $ (x(t) ,y(t) ) $, it follows that there exists a sequence $ (t_n)_{n\in \mathbb{N}} \subseteq \left[ t_0,+\infty \right)$ such that $ t_n \to +\infty $ as $ n\to +\infty $ and  for every $ n\in \mathbb{N} $
\begin{equation}\label{w}
\|(x(t_n),y(t_n))\|=\|(\bar{x}^*,\bar{y}^*)\|.
\end{equation} Now, we show that $ (x(t_n),y(t_n))\to (\bar{x}^*,\bar{y}^*) $ as $ n\to +\infty $. Let $ (\hat{x},\hat{y}) $ be a weak sequential cluster point of $ (x(t_n),y(t_n))_{n\in \mathbb{N}} $. By a similar argument used in Case II, we have
\begin{equation*}
(x(t_n),y(t_n))\rightharpoonup (\bar{x}^*,\bar{y}^*),~~~\mbox{as}~n\to +\infty.
\end{equation*}This, together with (\ref{w}), gives
$ \lim_{n\to +\infty} \|(x(t_n),y(t_n))-(\bar{x}^*,\bar{y}^*)\|=0 $.
Thus,
\begin{equation*}
\liminf_{t\to +\infty} \|(x(t ),y(t ))-(\bar{x}^*,\bar{y}^*)\|=0.
\end{equation*}
The proof is complete.\qed
\end{proof}

We now consider the special case where $ \beta(t)=t^r $ with $ r>0 $ in the dynamical system (\ref{dyn}). In this setting, we have, for any $ M>0 $ and $ t\geq t_0 $, $ \frac{\dot{\beta}(t)}{\beta(t)}\leq \frac{\alpha-1}{t^q}-\frac{M}{t} $. As a consequence, (\ref{str6}) becomes
 \begin{equation}\label{str6-c}
   \exists M>0~  \textup{such that}~\lim_{t\to +\infty} t^{M-p+r}=+\infty.
 \end{equation}
On the other hand, (\ref{str7}) becomes
\begin{equation}\label{str7-c}
\int_{t_0}^{+\infty}t^{-q-p+r}dt<+\infty.
\end{equation} Therefore, it is easy to establish the following corollary in terms of  Theorem \ref{strongconvergence}.
\begin{corollary}Let $ \beta(t)=t^r $ with $ r>0 $ and $ (x(t),y(t))_{t\geq t_0} $ be a global solution of the dynamical system \textup{(\ref{dyn})}.
Suppose that \textup{(\ref{str6-c})} and \textup{(\ref{str7-c})} hold.  Then, for  $  (\bar{x}^*,\bar{y}^*)=\textup{Proj}_{\Omega}{0} $,
\begin{equation*}
\liminf_{t\to +\infty}\|(x(t),y(t))-(\bar{x}^*,\bar{y}^*)\| =0.
\end{equation*}
Further, if there exists a large enough $ T $ such that the trajectory $ (x(t),y(t))_{t\geq T} $ stays in either the open ball $\mathbb{B}(0,\|(\bar{x}^*,\bar{y}^*) \|) $ or its complements, then,
\begin{equation*}
\lim_{t\to +\infty}\|(x(t),y(t))-(\bar{x}^*,\bar{y}^* )\|=0.
\end{equation*}
\end{corollary}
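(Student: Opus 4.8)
The plan is to perform a trichotomy on the eventual position of the trajectory $z(t)\coloneqq(x(t),y(t))$ relative to the sphere $\{z:\|z\|=\|\bar z^*\|\}$, where $\bar z^*\coloneqq(\bar x^*,\bar y^*)=\mathrm{Proj}_\Omega 0$. Either (I) $z(t)$ eventually stays in the complement of $\mathbb B(0,\|\bar z^*\|)$, or (II) it eventually stays inside $\mathbb B(0,\|\bar z^*\|)$, or (III) it crosses the sphere $\|z\|=\|\bar z^*\|$ for arbitrarily large times. Cases I and II each yield the full strong limit $\lim_{t\to+\infty}\|z(t)-\bar z^*\|=0$, whereas Case III produces only $\liminf_{t\to+\infty}\|z(t)-\bar z^*\|=0$; this is exactly why the first conclusion holds unconditionally while the second requires the extra hypothesis that the trajectory eventually remains on one side.

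For Case I I would introduce the shifted energy $\tilde{\mathcal E}(t)=\hat{\mathcal E}(t)-\frac{c\beta(t)}{2t^p}(\|\bar x^*\|^2+\|\bar y^*\|^2)$ built on the minimal norm solution, and rerun the computation of Theorem~\ref{slow} verbatim. The only new feature is the term $k(t)(\|x\|^2+\|y\|^2-\|\bar x^*\|^2-\|\bar y^*\|^2)$; since $k(t)\le 0$ by \eqref{aaa} and $\|x\|^2+\|y\|^2\ge\|\bar x^*\|^2+\|\bar y^*\|^2$ in Case I, this term is nonpositive and drops out, giving $\frac{M}{t}\tilde{\mathcal E}(t)+\dot{\tilde{\mathcal E}}(t)\le 0$. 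Multiplying by $t^M$ shows $t^M\tilde{\mathcal E}(t)$ is nonincreasing, hence $\tilde{\mathcal E}(t)\le T^M\tilde{\mathcal E}(T)/t^M$. Setting $\epsilon(t)=c/t^p$ and invoking Lemma~\ref{Phi} converts this into $\|z(t)-z_{\epsilon(t)}\|^2+\|z_{\epsilon(t)}\|^2-\|\bar z^*\|^2\le \frac{2T^M\tilde{\mathcal E}(T)}{t^M\beta(t)\epsilon(t)}$. The right-hand side is a constant over $c\,t^{M-p}\beta(t)$, which vanishes precisely because $\lim_{t\to+\infty}t^{M-p}\beta(t)=+\infty$ in \eqref{str6}; combined with the Tikhonov curve properties \eqref{ep} (so $\|z_{\epsilon(t)}\|^2-\|\bar z^*\|^2\to0$ and $z_{\epsilon(t)}\to\bar z^*$), this forces $z(t)\to\bar z^*$ strongly.

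For Cases II and III the argument is of Opial type and leans on Theorem~\ref{slow}. In Case II, where $\|z(t)\|<\|\bar z^*\|$ eventually, I would take any weak sequential cluster point $\bar z$ of $z(t)$; weak lower semicontinuity of $\Phi_{\bar z^*}$ together with $\Phi_{\bar z^*}(z(t))=\mathcal L(x(t),\bar y^*)-\mathcal L(\bar x^*,y(t))\to0$ (this is \eqref{o} from Theorem~\ref{slow}) gives $\Phi_{\bar z^*}(\bar z)=0$, so $\bar z\in\Omega$; weak lower semicontinuity of the norm and $\|z(t)\|<\|\bar z^*\|$ yield $\|\bar z\|\le\|\bar z^*\|$, and uniqueness of the minimal norm element forces $\bar z=\bar z^*$. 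Hence $z(t)\rightharpoonup\bar z^*$, and the sandwich $\|\bar z^*\|\le\liminf\|z(t)\|\le\limsup\|z(t)\|\le\|\bar z^*\|$ upgrades weak to strong convergence. Case III applies the same argument along a sequence $t_n\to+\infty$ with $\|z(t_n)\|=\|\bar z^*\|$ extracted by continuity from the infinitely many sphere crossings, giving $z(t_n)\to\bar z^*$ and therefore $\liminf_{t\to+\infty}\|z(t)-\bar z^*\|=0$.

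The main obstacle I anticipate is the bookkeeping in Case I: one must verify that the extra contributions produced by subtracting $\frac{c\beta(t)}{2t^p}(\|\bar x^*\|^2+\|\bar y^*\|^2)$ regroup the coefficient of $\|x\|^2+\|y\|^2$ into $k(t)(\|x\|^2+\|y\|^2-\|\bar x^*\|^2-\|\bar y^*\|^2)$ and exactly cancel the constant forcing term $\frac{(\alpha-1)c}{2t^{q+p}}\beta(t)(\|\bar x^*\|^2+\|\bar y^*\|^2)$ that survived in Theorem~\ref{slow}, so that the inequality becomes genuinely homogeneous ($\le 0$) rather than merely bounded. Without this cancellation the estimate $\tilde{\mathcal E}(t)=O(1/t^M)$ would degrade and the crucial division by $t^M\beta(t)\epsilon(t)$ in the Lemma~\ref{Phi} step would no longer drive the bound to zero; the coupling between the growth condition $\lim_{t\to+\infty} t^{M-p}\beta(t)=+\infty$ and the decay $\epsilon(t)=c/t^p$ is what makes the whole scheme close.
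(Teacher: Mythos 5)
Your proposal is correct and follows essentially the same route as the paper: the corollary is obtained by specializing Theorem \ref{strongconvergence} to $\beta(t)=t^r$ (under which (\ref{str6}) and (\ref{str7}) reduce to (\ref{str6-c}) and (\ref{str7-c})), and your three-case trichotomy, the shifted energy $\tilde{\mathcal{E}}$, the use of Lemma \ref{Phi} with $\epsilon(t)=c/t^p$, and the Opial-type weak-to-strong upgrade reproduce the paper's proof of that theorem step for step. The only difference is presentational: the paper proves the corollary by direct citation of the theorem, whereas you re-run the theorem's argument in the special case.
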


\section{Numerical experiments}
In this section,  we illustrate the theoretical results by two numerical examples. In the numerical experiments, the dynamical system $(\ref{dyn})$ is solved numerically with a Runge-Kutta adaptive method (ode45 in
MATLAB version R2019b).  All codes are performed  on a PC (with 2.30GHz Intel Core i5-8300H and 8GB memory).

\begin{example}\label{example5.1} Let $x:=(x_1,x_2)\in \mathbb{R}^2$ and $y:=(y_1,y_2)\in\mathbb{R}^2$.
Consider the following convex-concave saddle point problem:
\begin{equation}\label{example1}
\min_{x\in \mathbb{R}^2}\max_{y\in \mathbb{R}^2} e^{(x_1+x_2)^2}+2(x_1+x_2)(y_1+y_2)-(y_1+y_2)^2,
\end{equation}
where $ f(x) = e^{(x_1+x_2)^2}$, $ g(y)=(y_1+y_2)^2$ and
$$
K=\left(\begin{array}{cc}
2&2\\
2&2
\end{array}\right).
$$
Clearly, the solution set of problem (\ref{example1}) is $ \left\{(x,y)\in\mathbb{R}^2\times\mathbb{R}^2~|~x_1+x_2=0  \mbox{ and }  y_1+y_2=0\right\}.
$
Thus,  $(\bar{x}^*,\bar{y}^*):= (0,0,0,0)^\top $ is the minimal norm   solution of problem (\ref{example1}).

In the first numerical experiment, the dynamical system (\ref{dyn}) is solved on the time interval $[1, 200]$. We consider the initial time $ t_0=1 $ and take the following initial conditions:
\begin{equation}\label{initial}
x(t_0)=\left[\begin{matrix}
1\\
1.5
\end{matrix}\right],~~~y(t_0)=\left[\begin{matrix}
1\\
1.5
\end{matrix}\right],~~~\dot{x}(t_0)=\left[\begin{matrix}
1\\
1
\end{matrix}\right],~~~\mbox{and}~~~\dot{y}(t_0)=\left[\begin{matrix}
1\\
1
\end{matrix}\right].~~~
\end{equation}
For any $(x^*,y^*)\in \Omega $, we  consider the  influence of Tikhonov  regularization term on the convergence rates. Take $ \alpha=3 $, $ q=0.8 $, $c=1$, and $\beta(t)=t^{0.5}$.
 Figure \ref{f1} displays the behaviors of $ \mathcal{L}(x(t),y^*)-\mathcal{L}(x^*,y(t)) $,  $ \|x(t)-x^*\|+\|y(t)-y^*\|$, and $ \|\dot{x}(t)\|+\|\dot{y}(t)\| $ along the trajectory $ (x(t),y(t)) $ generated by the dynamical system (\ref{dyn}) under different settings on parameter $ p \in \left\{0.8, 1.0, 1.2, 1.4\right\} $.
 \vspace{-1em}
 \begin{figure}[H]%
     \centering
     \subfloat[convergence of primal-dual gap ]{
         \includegraphics[width=0.31\linewidth]{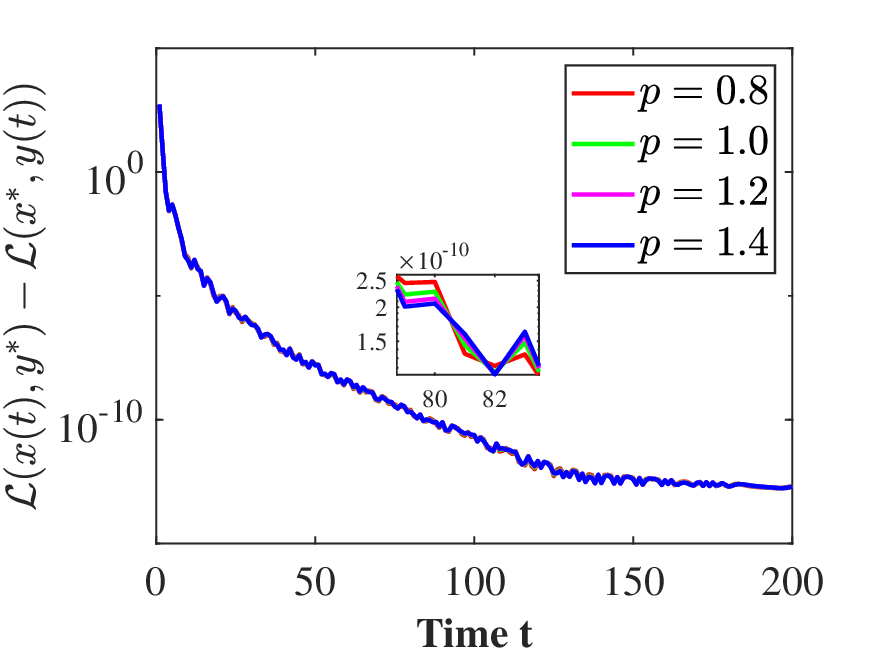}
         }\hfill
     \subfloat[convergence of trajectory error]{
         \includegraphics[width=0.31\linewidth]{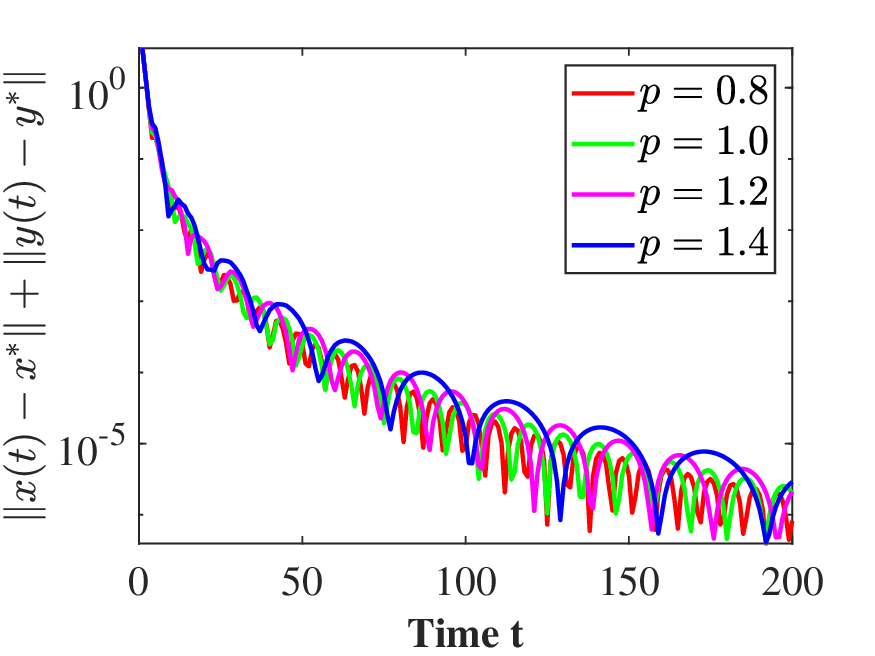}
         }
         \subfloat[convergence of velocity ]{
             \includegraphics[width=0.31\linewidth]{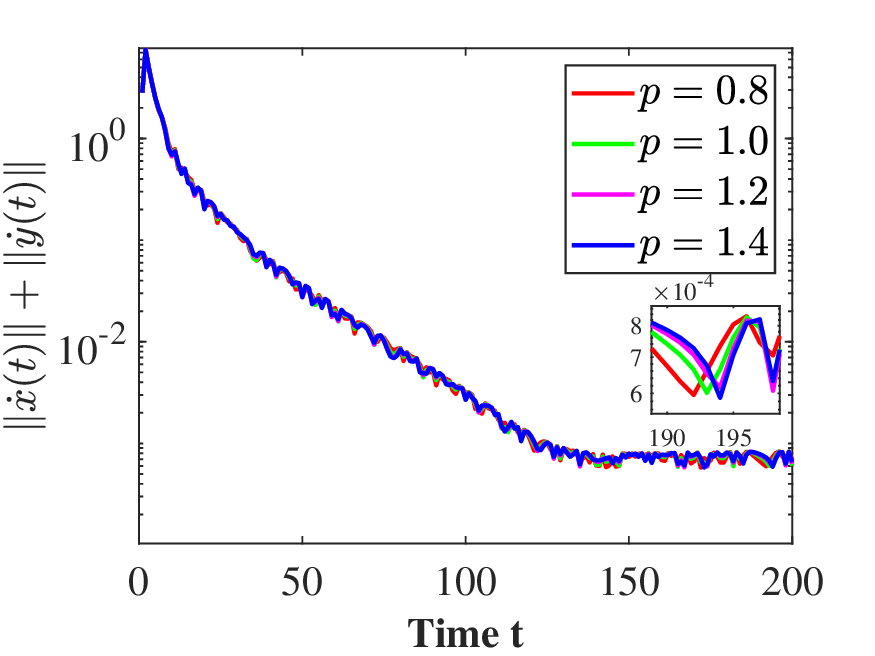}
             }\hfill
     \caption{Convergence  analysis of the dynamical system (\ref{dyn}) with different parameter $ p $}
     \label{f1}
 \end{figure}\vspace{-1em}
   As shown in Figure \ref{f1}, the numerical results  are in  agreement with the theoretical claims. $ \mathcal{L}(x(t),y^*)-\mathcal{L}(x^*,y(t)) $  and $ \|\dot{x}(t)\|+\|\dot{y}(t)\| $  are not very sensitive to the changes of the Tikhonov regularization parameter.

In the second numerical experiment, the dynamical system $(\ref{dyn})$ is solved on the time interval $[1, 20]$. We investigate the strong convergence of the trajectory to the minimal norm solution $(\bar{x}^*,\bar{y}^*):= (0,0,0,0)^\top$. Set $ \alpha=3 $, $ q=0.8 $, $p=0.8$, and $\beta(t)=t^{0.5} $. Under the same initial conditions (\ref{initial}), we plot the trajectory $ (x(t),y(t)) $ generated by the dynamical system (\ref{dyn}) with  $c=1$ (i.e., with Tikhonov regularization) in  Figure \ref{f2} (a), and one of  system (\ref{dyn}) with  $c=0$  (i.e., without Tikhonov regularization) in  Figure \ref{f2} (b).
\vspace{-1em}
\begin{figure}[H]
 \centering
    \subfloat[with Tikhonov regularization ]{
        \includegraphics[width=0.48\linewidth]{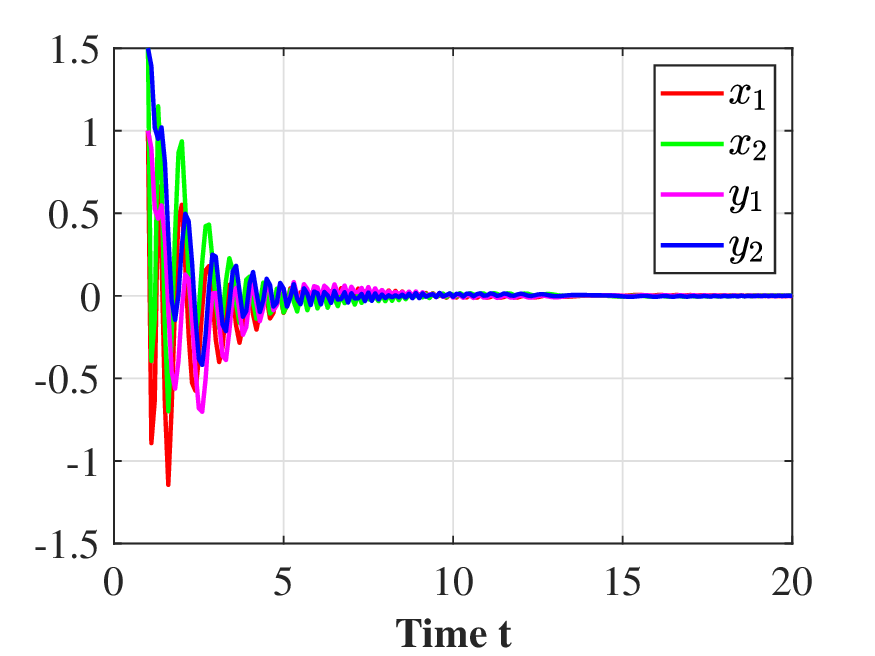}
        }\hfill
    \subfloat[without Tikhonov regularization ]{
        \includegraphics[width=0.48\linewidth]{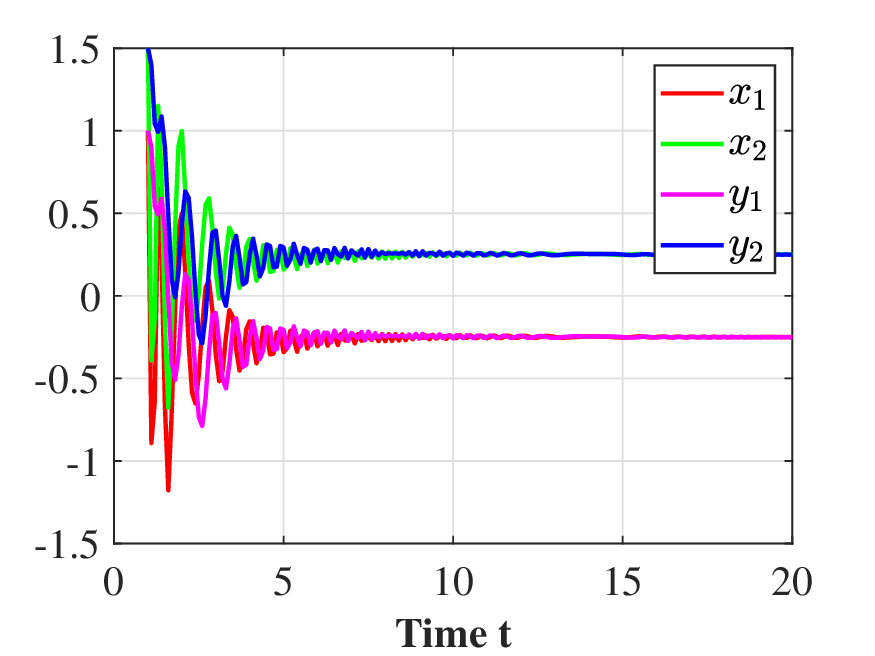}
        }
    \caption{Convergence of trajectories}
    \label{f2}
\end{figure}\vspace{-1em}

As seen in Figure \ref{f2}, only  with Tikhonov regularization terms does the trajectory   $ (x(t),y(t)) $ converge to the minimal norm solution $ (\bar{x}^*,\bar{y}^*)=(0,0,0,0)^\top $. However, when the dynamical system is not controlled by Tikhonov  regularization terms, the trajectory $ (x(t),y(t)) $ converges to   $ (-0.25, 0.25, -0.25, 0.25)^\top $.
\end{example}

 Motivated by   \cite[Example 3]{2024he}, we consider the influence of Tikhonov regularization on the  convergence of the objective function value in the following example.

 \begin{example} Consider  the linear regression problem with smoothed-$ L_1 $-regularization:
\begin{equation}\label{example2-1}
\min _{x\in \mathbb{R}^n} \Phi(x)=\frac{1}{2}\|Kx-b\|^2+\lambda \mathcal{R}^a(x),
\end{equation}
where $ K\in\mathbb{R}^{m\times n} $, $ b\in\mathbb{R}^m $, $ \lambda >0$, and
\begin{equation*}
\mathcal{R}^a(x)=\sum_{i=1}^n\frac{1}{a}\left(\log(1+\exp(ax_i))+(\log(1+\exp(-ax_i))\right).
\end{equation*}
 Problem (\ref{example2-1}) can be reformulated as  the following convex-concave saddle point problem:
\begin{equation*}
\min _{x\in \mathbb{R}^n} \max _{y\in \mathbb{R}^m} \lambda \mathcal{R}^a(x) +\left\langle Kx,y\right\rangle -\left(\frac{1}{2}\|y\|^2+\left\langle b,y\right\rangle\right).
\end{equation*}

 In the following experiment,  we consider the influence of Tikhonov regularization on
the convergence of the objective function value $\Phi(x)$. Here, we assume that $K$ is generated from the normal distribution, that is,
 for $ K=(k_{ij})_{m\times n} $ where $ k_{ij} $ is generated independently from the standard normal distribution,  a singular value decomposition is first given for the matrix $K$ and then the diagonal singular values are replaced with  an array of log-uniform random values in predefined range.

 Now, let  $ \kappa(K) $ be the predefined
 condition number of $K$ and take $ \lambda=0.1 $, $ a=100 $, $\alpha=6  $ and $ p=2 $.
We test the dynamical
system (\ref{dyn})  under the following   settings on  parameters $ q $ and $ \beta(t) $:
 \begin{itemize}
 \item case 1: $ q=0.2 $ and $ \beta(t)=t^{0.1} $.
 \item case 2: $ q=0.4 $ and $ \beta(t)=t^{0.2} $.
 \item case 3: $ q=0.6 $ and $ \beta(t)=t^{0.3} $.
 \end{itemize}
 In each case, we further set parameter $ c\in\{0,10\} $.  The results are depicted in Figures  \ref{f13}, \ref{f14} and \ref{f15}. \vspace{-1em}
\begin{figure}[H]
 \centering
    \subfloat[$ \kappa(K)\approx10 $ ]{
        \includegraphics[width=0.48\linewidth]{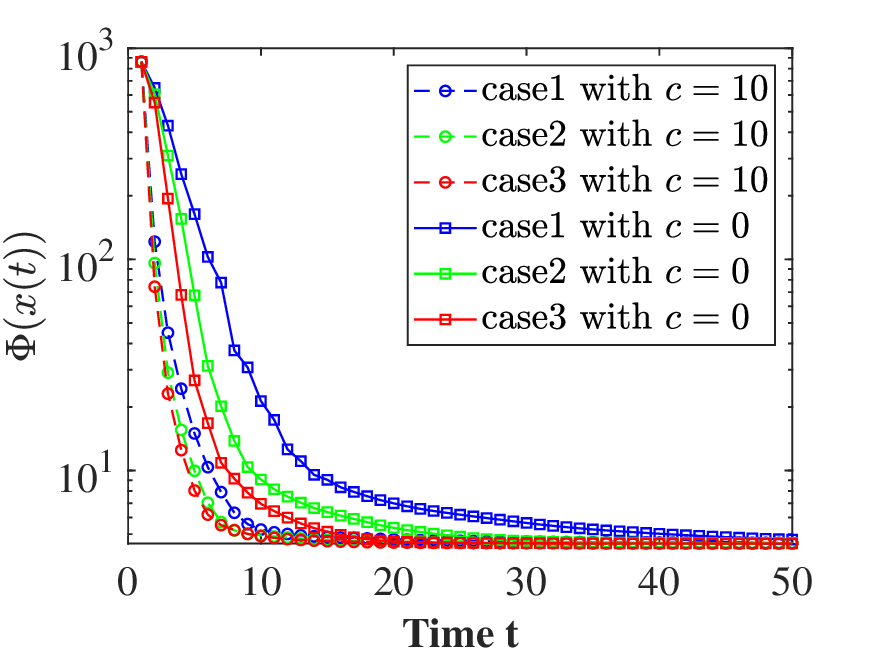}
        }\hfill
    \subfloat[ $ \kappa(K)\approx200  $ ]{
        \includegraphics[width=0.48\linewidth]{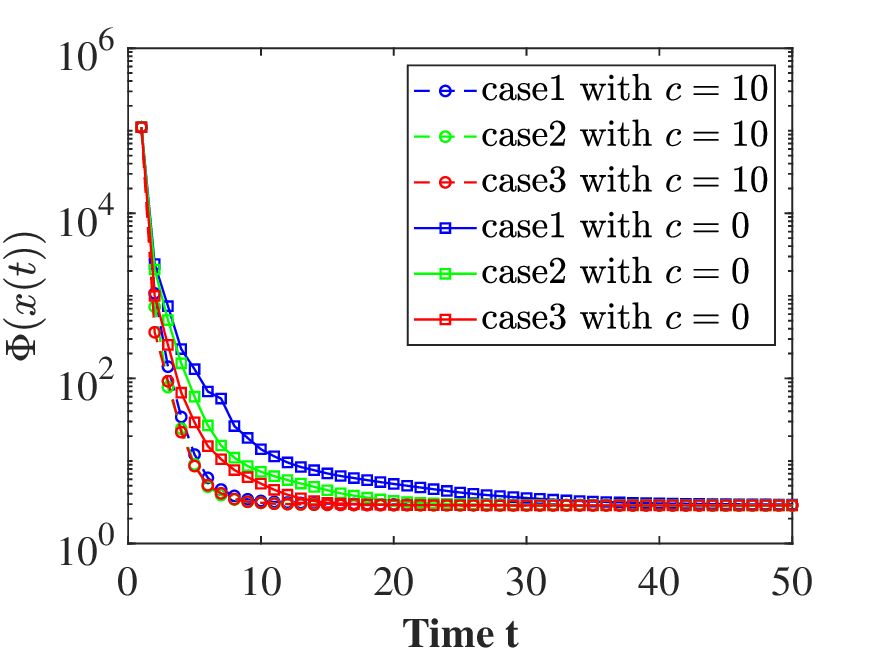}
        }
    \caption{Convergence   of $\Phi$ with $ m=100 $ and $ n=200 $   }
    \label{f13}
\end{figure}\vspace{-5em}
\begin{figure}[H]
 \centering
    \subfloat[$ \kappa(K)\approx10 $ ]{
        \includegraphics[width=0.48\linewidth]{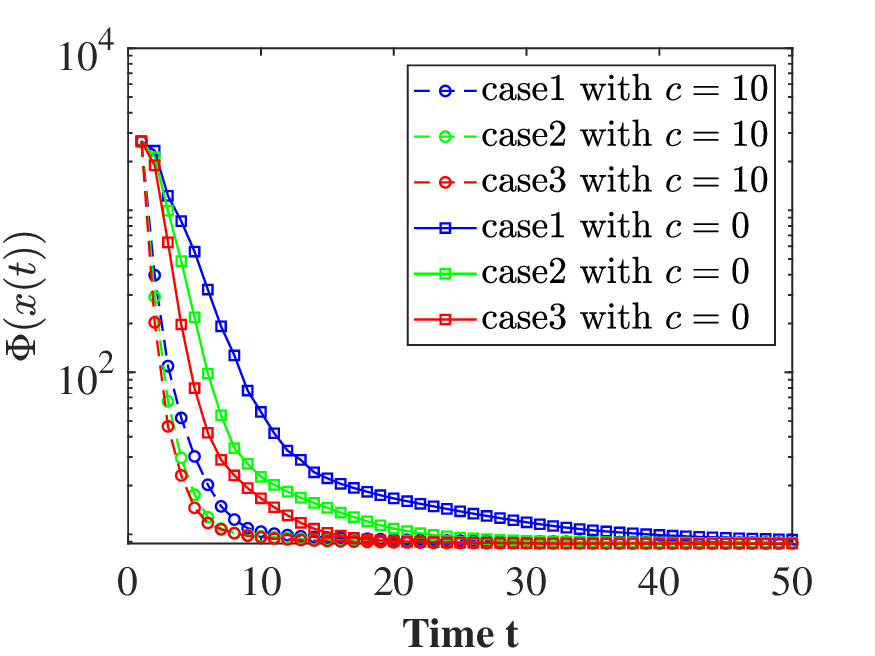}
        }\hfill
    \subfloat[$ \kappa(K)\approx200 $ ]{
        \includegraphics[width=0.48\linewidth]{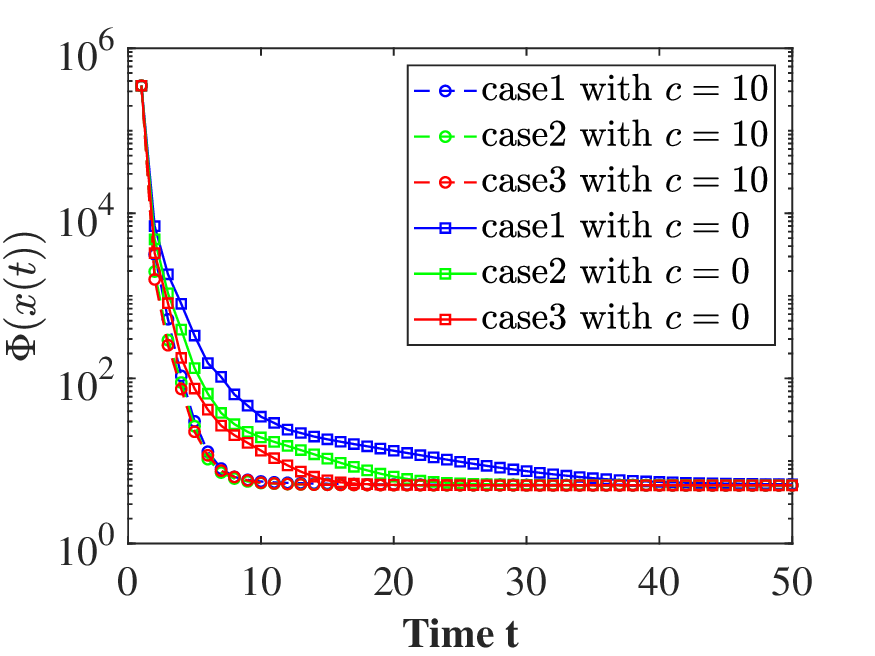}
        }
    \caption{Convergence  of $\Phi$ with $ m=200 $ and $ n=500 $   }
    \label{f14}
\end{figure}\vspace{-5em}
\begin{figure}[H]
 \centering
    \subfloat[$ \kappa(K)\approx10 $ ]{
        \includegraphics[width=0.48\linewidth]{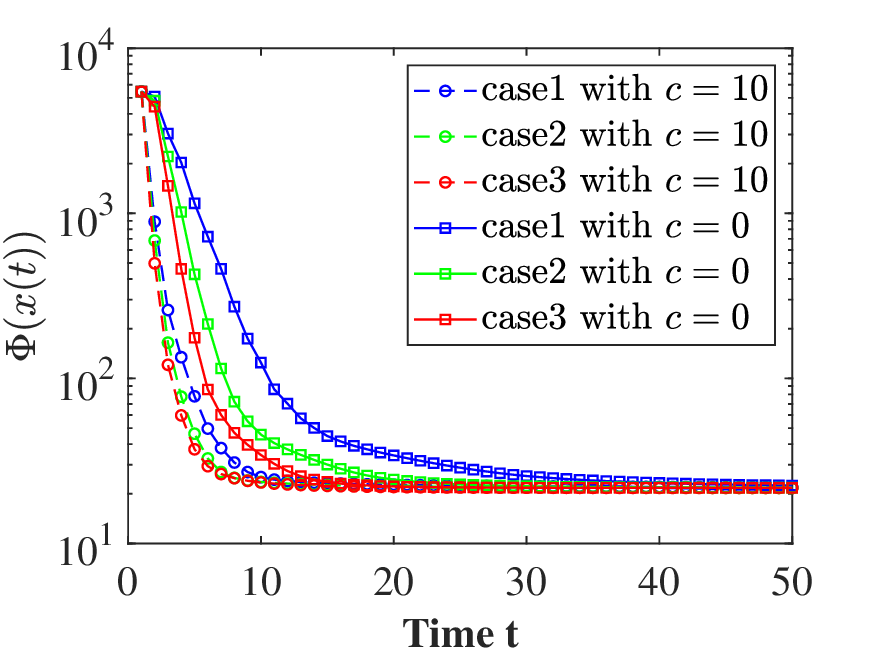}
        }\hfill
    \subfloat[$ \kappa(K)\approx200 $ ]{
        \includegraphics[width=0.48\linewidth]{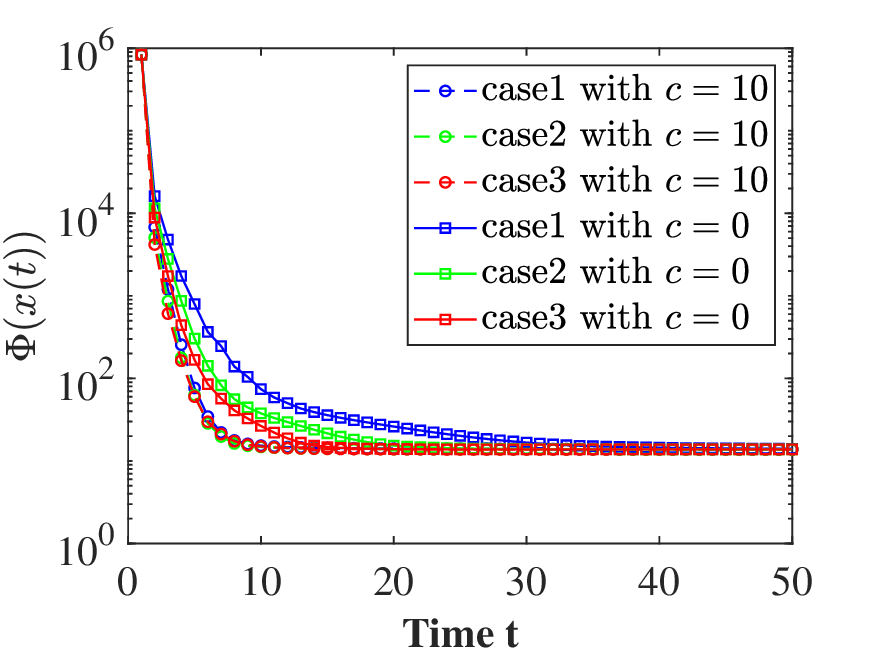}
        }
    \caption{Convergence   of $\Phi$ with $ m=500 $ and $ n=1000 $   }
    \label{f15}
\end{figure}\vspace{-1em}

As depicted in Figures \ref{f13}, \ref{f14} and \ref{f15},  we can see that:
\begin{itemize}
\item[{\rm (i)}] The dynamical system (\ref{dyn}) with  $ c\not=0 $ outperforms the case $ c=0 $ (dynamical system (\ref{dyn}) without  Tikhonov regularization).

\item[{\rm (ii)}]  Compared with the cases $ c=0 $, Tikhonov regularization helps to  accelerate the convergence of the objective function value $ \Phi(x(t)) $.
\end{itemize}
\end{example}
\section{Conclusion}
 In this paper, we consider the Tikhonov regularized second-order primal-dual dynamical system (\ref{dyn}) for the convex-concave bilinear saddle point problem (\ref{PD}). The dynamical system (\ref{dyn}) involves the Tikhonov regularization terms for both the primal and dual variables. Under some mild assumptions, we prove the fast convergence rates of the primal-dual gap and the velocity vector, as well as the strong convergence of the trajectory $ (x(t),y(t)) $ generated by the dynamical system $(\ref{dyn})$ to the minimal norm solution of problem (\ref{PD}).

 Although some new results have been obtained on the primal-dual dynamical system for problem (\ref{PD}), there are remaining questions to be considered in the future. For instance, as shown  in Figures \ref{f1} and \ref{f2}, the proposed method exhibits pronounced oscillations throught iteration. Thus, it is of importance to consider the  dynamical system (\ref{dyn}) with Hessian-driven damping (see, e.g., \cite{Hessian}), which makes it possible to neutralize the oscillations. On the other hand, there are much more convex-concave saddle point problems in which the related functions are non-smooth. It is also an interesting topic to consider how the proposed methodology can be extended to handle non-smooth convex optimization problems.

\section*{Funding}
\small{  This research is supported by the Natural Science Foundation of Chongqing (CSTB2024NSCQ-MSX0651 and CSTB2024NSCQ-MSX1282) and the Team Building Project for Graduate Tutors in Chongqing (yds223010).}

\section*{Data availability}

 \small{ The authors confirm that all data generated or analysed during this study are included in this article.}

 \section*{Declaration}

 \small{\textbf{Conflict of interest} No potential conflict of interest was reported by the authors.}

\bibliographystyle{plain}

\end{document}